\DeclareSymbolFont{bbold}{U}{bbold}{m}{n}
\DeclareSymbolFontAlphabet{\mathbbold}{bbold}
\theoremstyle{plain} 
\newtheorem{thm}{Theorem}[section] 
\newtheorem{lem}[thm]{Lemma}
\newtheorem{cor}[thm]{Corollary}
\newtheorem{pro}[thm]{Proposition}
\newtheorem{problem}[thm]{Problem}
\newtheorem{question}[thm]{Question}
\newtheorem{obs}[thm]{Observation}
\theoremstyle{definition}
\newtheorem{defn}[thm]{Definition}
\newtheorem{remark}[thm]{Remark}
 \newcommand{\cc}{\mathbin{\textup{\copyright}}}
 \newcommand{\Sk}{\operatorname{Sk}}
 \newcommand{\Co}{\operatorname{Co}}
 \newcommand{\inj}{\rightarrowtail}
\newcommand{\up}[1]{\textup{#1}}
 \newcommand{\bbox}{\mathbin{\Box}}
 \newcommand{\HSI}{\textsf{HSI}}
 \newcommand{\EAP}{\textsf{EAP}}
 \newcommand{\CAP}{\textsf{CAP}}
\begin{document}

\title[]{Finite models for positive combinatorial and exponential algebra}
\author{Tumadhir Alsulami}
\address{Department of Mathematical and Physical Sciences, La Trobe University, Victoria  3086,
Australia and
Department of Mathematics, Umm AlQura University, Makkah, Saudi Arabia} \email{T.AlSulami@latrobe.edu.au and tfsolami@uqu.edu.sa}
\author{Marcel Jackson}
\address{Department of Mathematical and Physical Sciences, La Trobe University, Victoria  3086,
Australia} \email{M.G.Jackson@latrobe.edu.au}

\subjclass[2010]{Primary: 08B05; Secondary: 03C10, 03C05, 03E10}

\keywords{Finite basis problem, semiring, binomial coefficients, factorial, well ordering, finite model property, Tarski's High School Identity Problem}

\begin{abstract}
We use high girth, high chromatic number hypergraphs to show that there are finite models of the equational theory of the semiring of nonnegative integers whose equational theory has no finite axiomatisation, and show this also holds if factorial, fixed base exponentiation and operations for binomial coefficients are adjoined.  We also  derive the decidability of the equational logical entailment operator $\vdash$ for  antecedents true on $\mathbb{N}$ by way of a form of the finite model property.  

Two appendices contain additional basic development of combinatorial operations.  Amongst the observations are an eventual dominance well-ordering of combinatorial functions and consequent representation of the ordinal $\epsilon_0$ in terms of factorial functions; the equivalence of the equational logic of combinatorial algebra over the natural numbers and over the positive reals; and a candidate list of elementary axioms.
\end{abstract}

\maketitle

\section{Introduction}
This paper brings together two themes in equational logic that were popularised by Alfred Tarski.  

The first theme is the finite basis problem for finite algebras, which is the problem of determining when the equational theory of a finite algebra is finitely based.  
The possible decidability of this problem is known as Tarski's Finite Basis Problem and was eventually shown to be undecidable by Ralph McKenzie~\cite{mck}.  

The second theme concerns the strength of the usual index laws of exponentiation on positive numbers.  
Tarski's High School Algebra Problem asked whether the obvious additive, multiplicative and exponential equational laws for $\langle\mathbb{N},+,\cdot,\uparrow,1\rangle$ (where $\uparrow$ denotes exponentiation: $x\uparrow y:=x^y$) are complete for the full equational theory.  
This problem was shown to have a negative solution by Alex Wilkie\footnote{This manuscript was distributed in 1980, but was not published until two decades later.}~\cite{wil03}, and Rueven Gurevi\v{c}~\cite{gur90} later showed that no finite basis is possible.  
Interest in finite models of these laws emerged after other work by Gurevi\v{c}~\cite{gur85} showed that the equational theory of positive exponential algebra has a form of the finite model property, and used this to provide finite counterexamples to the provability of Wilkie's law from the High School Identities.

In this paper we show that there are finite models satisfying all true laws of $\langle\mathbb{N},+,\cdot,\uparrow,1\rangle$ but whose equational theory is without a finite basis.  
We also commence an exploration encouraged by George McNulty and Caroline Shallon~\cite{mcn,mcnsha}, by considering a combinatorial variant of Tarski's High School Algebra Problem that concerns the laws of $\langle\mathbb{N}_0,+,\cdot,\cc,!,\exp_2,0,1\rangle$, where $\cc$ denotes a binomial coefficient operation.  
We show that this equational theory also has the same kind of finite model property as shown by Gurevi\v{c} for  exponential algebra, and that it also has finite models without a finite identity basis.  Appendix A presents a number of further results paralleling known results for $\{+,\cdot,\cc,!,\exp_2,0,1\}$ that parallel known results for $\{+,\cdot,\uparrow,1\}$; while the results are nontrivial, they all follow with very short proofs from known results.  Appendix B presents a candidate list of ``obvious'' laws for this system.

Throughout, we use \emph{equation} and \emph{identity} synonymously to refer to atomic formul{\ae}~$s\approx t$, where  $s$ and $t$ are algebraic terms.  Satisfaction of equations is defined by satisfaction of the sentence $\forall\vec{x}\, s\approx t$, where $\vec{x}$ denotes the list of variables appearing in terms $s,t$.  The \emph{variety} generated by an algebra ${\bf A}$ is the class of all algebras in the same signature as ${\bf A}$ and satisfying the equational theory of ${\bf A}$.  The reader is directed to a text such as Burris and Sankappanavar~\cite{bursan} or Freese, McKenzie, McNulty and Taylor~\cite{FMMT} for a more complete overview of these ideas.  We use $\mathbb{N}$ to denote $\{1,2,\dots\}$ and $\mathbb{N}_0$ to denote $\{0\}\cup\mathbb{N}$.

\subsection{The finite basis problem}
The question of whether finite algebras have a finite identity basis goes back at least as far as Bernhard Neumann~\cite{neu}, but after a negative example due to Lyndon~\cite{lyn} in the 1950s, it became one of the major themes of group theory~\cite{hneu}, universal algebra~\cite{wil} and semigroup theory~\cite{vol}, amongst others.  
The group theoretic theme came to an abrupt end in the 1960s when Oates and Powell~\cite{oatpow} solved Neumann's problem by showing that every finite group has a finite identity basis, with a similar result for finite rings (Kruse~\cite{kru} and L'vov~\cite{lvo}) following shortly after.  
The problem remains very active in semigroup theory and surrounding areas, where there are many finite examples without a finite identity basis (there are precisely four, up to order six, for example~\cite{leezha}).
Despite McKenzie's negative solution to Tarski's problem, the complexity of the problem remains unknown in most other classes for which the algorithmic problem is nontrivial.  (Obviously, it is trivial for groups by~\cite{oatpow}, even if the proof of triviality is not trivial.)
Due to the cited results for finite groups, finite rings, and similar results for commutative semigroups (finite or otherwise)~\cite{per}, the algebras $\langle\mathbb{N}_0,+\rangle$, or $\langle\mathbb{N}_0,\cdot\rangle$, or $\langle\mathbb{Z},+,-,\cdot\rangle$ (with or without $0$ or $1$ included as constants), all have the property that every finite algebra satisfying their laws has a finite identity basis.  
It is then something of a surprise that there is a three-element additively idempotent and multiplicatively commutative semiring ``$S_7$'' that has no finite basis for its identities; see Jackson, Ren and Zhao~\cite{JRZ}\footnote{The name $S_7$ comes from earlier enumerations of small semirings, such as in~\cite{ZRCSD}.}.    
While it is not observed in~\cite{JRZ}, the semiring~$S_7$ lies in the variety of $\langle\mathbb{N},+,\cdot\rangle$, because the equational theory of $\langle\mathbb{N},+,\cdot\rangle$ is axiomatised by just the usual commutative semiring laws; see Theorem~2.1.b of~\cite{burlee92} for example (subject to minor adjustment for constants $0,1$).  
The nonfinite basis property is also established in~\cite{JRZ} for an extension $S_7^1$ of $S_7$, which lies in the variety of $\langle\mathbb{N},+,\cdot,1\rangle$, while in Wu, Ren and Zhao~\cite{WRZ}, a different 4-element extension $S_7^0$, is shown to be without a finite identity basis, and lies in the variety of $\langle\mathbb{N}_0,+,\cdot,0\rangle$.
The main contribution of this paper is to show that for any combination $\tau$ of operations with 
\[
\{+,\cdot\}\subseteq \tau\subseteq \{+,\cdot,\uparrow,\cc,\exp_2,!,0,1\}
\] 
there is a five element algebra that satisfies all valid equations of $\langle\mathbb{N}_0,\tau\rangle$ (or $\langle\mathbb{N},\tau\rangle$ if $0$ is not required) and has no finite identity basis.  
In the case of the signature $\{+,\cdot,0\}$, the example can be collapsed to the $4$-element example of~\cite{WRZ}, providing a different proof of their result (and also accommodating the many extra operations, except for $\uparrow$).

\subsection{Tarski's High School Algebra problem and variants}
\emph{Tarski's High School Algebra Problem} asked whether the standard high school index laws for exponentiation, combined with the very familiar commutativity, associativity and distributivity laws for addition and multiplication on $\mathbb{N}=\{1,2,\dots\}$ are complete for the full equational theory of $\mathbb{N}$.  Following Burris and Lee~\cite{burlee92}, we denote these laws by \HSI.
There is strong interplay with general model theoretic questions for real valued functions, especially following Macintyre's proof~\cite{mac} that the equational laws of exponential arithmetic on $\mathbb{N}$ coincide with those of $\mathbb{R}^+$ (when coefficients are integers), bringing the High School Algebra problem into light as a restricted fragment of another famous problem of Tarski's, the possible decidability of the real ordered exponential field.  
The equational fragment was shown to be decidable in~\cite{mac}, with a further proof by R.~Gurevi\v{c} given in \cite{gur85}.
As already noted, the High School Algebra Problem was solved in the negative by Wilkie~\cite{wil03}, and with a nonfinite axiomatisability result (even for the 1-variable fragment) due to Gurevi\v{c}~\cite{gur90}.  
Gurevi\v{c}'s proof was later extended to include the case of $\mathbb{N}_0=\{0,1,2,\dots\}$ with $0^0:=1$ by Di Cosmo and Dufour~\cite{DiCosDuf}.   
Gurevi\v{c}'s work in \cite{gur90}, as well as in \cite{gur85}, introduced finite models of \HSI, a theme that was explored in far more detail by Burris and Lee \cite{burlee92}; see also \cite{burlee93}.  
With the assistance of Higgs, Burris and Lee found the mysterious finite sequence  $1$, $2$, $6$, $42$ and $1806$, as the only possible size for cyclic quotients of $\mathbb{N}$ with exponentiation.  Comparably mysterious cyclic model sizes have recently been identified by the present authors in the case of fixed base exponentiation~\cite{alsjac}.
As a demonstration of how difficult it is to determine when a finite algebra satisfies all laws of $\mathbb{N}$, the solution for even just the two element models was left as an open problem in~\cite{burlee92}, and solved only in the 2004 work of 
Asatryan~\cite{asa}\footnote{A proof approach for this result was suggested by the second author in his undergraduate thesis from 1995 based on the written description in \cite{burlee92} of Wilkie's~\cite{wil03}, which at the time had not been published.  
Later, a copy of the manuscript was kindly shared by Wilkie, and the proof approach was implemented.  
These results were presented at a Monash University colloquium in 2002, but the work of Asatryan \cite{asa} appeared shortly before the manuscript presenting these and other results was to be submitted.}.  
In the case of the five $2$-element algebras, satisfaction of \HSI\ is sufficient to verify satisfaction of all true laws of $\mathbb{N}$, but this is not true for larger size.  
The current record (in terms of smallest cardinality) for an $\HSI$-algebra failing some true law of $\mathbb{N}$ can be found in Burris and Yeats~\cite{buryea}, who give a $12$-element model of \HSI\ that fails Wilkie's original law from \cite{wil03}; Zhang~\cite{zha} later computer verified that no model of \HSI\ on less than $11$-elements can fail Wilkie's law, while the largest human-verified lower bound is~$8$~\cite{jac96}.  There are infinitely many different choices of valid laws that do not follow from $\HSI$, and so the possibility of even smaller counterexamples remains.  Work in progress by the authors will show that all of the 44 possible $3$-element models of the \HSI~\cite{burlee92} satisfy the full equational theory of $\langle \mathbb{N};+,\cdot,\uparrow,1\rangle$, pushing the lower bound for any possible counterexample of some true law to~$4$.
The article by Burris and Yeats~\cite{buryea} details many open problems relating to Tarski's High School Algebra Problem and its deeper interplay with number- and model-theoretic problems.  
The possible decidability of the following computational \emph{Exponential Algebra Problem} is not included in~\cite{buryea} or earlier references, but lies at the heart of all of the explorations in \cite{asa,burlee92,buryea,jac96}.
\begin{quotation}
\noindent\textsf{The Exponential Algebra Problem (EAP)}\\
\textsf{Instance:} a finite algebra ${\bf A}$ in the signature $\{+,\cdot,\uparrow,1\}$.\\
\textsf{Question:}  is ${\bf A}$ a model of the equational theory of $\langle \mathbb{N};+,\cdot,\uparrow,1\rangle$?
\end{quotation}
We say that ${\bf A}$ is a \emph{model of exponential algebra} if it is a YES instance of the \EAP, or equivalently if it satisfies all of the equational laws of exponential algebra on $\mathbb{N}$.
The relevance of the \EAP\ is enhanced by Lemma 2 of Gurevi\v{c}~\cite{gur85}, 
which proves the decidability of logical entailment for any sufficiently rich set of true laws of $\langle \mathbb{N};+,\cdot,\uparrow,1\rangle$ by way of a strong form of the finite model property: non-consequences can be invalidated on finite models of recursively bounded size.  

In 1983, McNulty and Shallon~\cite{mcnsha} posed a variant of Tarski's High School Algebra problem, instead concerning the combinatorial operations of $\exp_2:x\mapsto 2^x$, factorial $!:x\mapsto x!$ and the binary operation $\binom{x}{y}$; see again the more recent reiteration of essentially the same problem~\cite{mcn}.  
As~$\binom{x}{y}$ is only a partial operation, requiring $x\geq y$, we use an equivalent total operation, which we denote by $\cc$ and define by $x\cc y:=\binom{x+y}{y}$.  The operation table for~$\cc$ is Pascal's triangle rotated into a square array.  Just as factorial has an analytic continuation to the reals, in the form of the gamma function $x\mapsto \Gamma(x+1)$, so also does $\cc$ in the form of the inverse beta function $(x,y)\mapsto \frac{1}{B(x+1,y+1)}$.

We say that ${\bf A}$ is a \emph{model of combinatorial algebra} if it is a YES instance of the following problem, or equivalently if it satisfies all of the equational properties of the combinatorial operations $+,\cdot,!,\exp_2,\cc,0,1$ on $\mathbb{N}_0$.
\begin{quotation}
\noindent\textsf{The Combinatorial Algebra Problem (CAP)}\\
\textsf{Instance}: a finite algebra ${\bf A}$ in the signature $\{+,\cdot,\cc,!,\exp_2,0,1\}$.\\
\textsf{Question}:  is ${\bf A}$ a model of the equational theory of $\langle \mathbb{N}_0;+,\cdot,\cc,!,\exp_2,0,1\rangle$?
\end{quotation}
Subsignatures of $\{+,\cdot,!,\exp_2,\cc,0,1\}$ are also of interest, and versions of the \CAP\  are nontrivial and of interest there.  Likewise, full exponentiation could be included, with $0^0:=1$.  
It follows from Henson and Rubel~\cite{henrub} that  $\mathsf{HSI}$ is complete for terms involving only fixed base exponentiation\footnote{Henson and Rubel note that in private communication, Wilkie also claimed to have proved this result by different means, while the earlier work of Martin~\cite{mar} also contains similar facts, as partial solutions to Tarski's High School Algebra Problem}, and from this it is not hard to verify that the obvious axioms are complete for $\{+,\cdot,\exp_2,0,1\}$.  
It follows that the \CAP\ is decidable in the restricted signature $\{+,\cdot,\exp_2,0,1\}$, and in fact can be solved by simply verifying the finite list of axioms.  
This can be extended to fixed base exponentiation in other bases as well -- $\exp_b$ for $b\in \mathbb{N}$, as explored by the authors in \cite{alsjac}.  Once $!$ or $\cc$ is included though, nothing seems to be known.

In Appendix B, we list a possible set of basic axioms, in the theme of $\HSI$: all are familiar laws encountered at high school.    As has been standard for efforts in this area, we avoid including subtraction due to the fact that it makes operations partial; this is consistent with \cite{burlee92,mcnsha} for example.

\section{The finite model property for combinatorial algebra}
The following lemma is the combinatorial algebra version of Gurevi\v{c}'s Lemma~2 in~\cite{gur85} for the signature $\{+,\cdot,\uparrow,1\}$, and follows a similar proof approach. 
\begin{lem}\label{lem:gurevic}
Let $\mathscr{L}= \{+,\cdot,\cc,!,\exp_2,0,1\}$ and $\Omega$ be the set of equations 
\begin{align*}
&\{1\cdot x\approx x\cdot 1\approx x,\  x\cc 0\approx 1\approx 0\cc x,\\
& 0+x\approx x+0\approx x,\ 0\cdot x\approx x\cdot 0\approx 0,\\
&0!\approx 1,\ 1!\approx 1,\ \exp_2(0)\approx 1,\ \exp_2(1)\approx 1+1\approx (1+1)!\}.
\end{align*}
Then there is a recursive function $B:\mathscr{L} \times \mathscr{L} \rightarrow \mathbb{N} $ such that, for any $\mathscr{L}$-terms $t_1, t_2$, and any set 
$\Sigma$ of valid equalities, the property
$\Sigma\cup\Omega\nvdash t_1\approx t_2$ holds
 if and only if $t_1\approx t_2$ fails in a model of $\Sigma\cup\Omega$, with cardinality at most $B(t_1,t_2)$.
\end{lem}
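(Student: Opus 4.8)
The plan is to treat the two implications separately. The backward implication is soft: equational deduction is sound, so if $t_1\approx t_2$ fails in some model of $\Sigma\cup\Omega$ then $\Sigma\cup\Omega\nvdash t_1\approx t_2$. The substance is the forward implication, which I would prove along the lines of Gurevi\v{c}'s Lemma~2 in~\cite{gur85}. Assume $\Sigma\cup\Omega\nvdash t_1\approx t_2$. By completeness of equational logic, $t_1$ and $t_2$ name distinct elements of the relatively free algebra $\mathbf F$ of the variety of all models of $\Sigma\cup\Omega$ on the set $X=\{x_1,\dots,x_n\}$ of variables occurring in $t_1,t_2$; writing $\bar x$ for the tuple of free generators, $t_1^{\mathbf F}(\bar x)\ne t_2^{\mathbf F}(\bar x)$. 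Every element of $\mathbf F$ has the form $u^{\mathbf F}(\bar x)$ for some $\mathscr L$-term $u$ over $X$, and I would attach to each element its \emph{complexity} $\kappa(e)=\min\{\,|u|:u^{\mathbf F}(\bar x)=e\,\}$, where $|u|$ is the symbol count of $u$. For each $N$ the set $F_{\le N}=\{e:\kappa(e)\le N\}$ has size at most the number $\nu(n,N)$ of $\mathscr L$-terms on $n$ variables with at most $N$ symbols, and $\nu$ is a recursive function.

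The next step is to cut $\mathbf F$ down to a finite model. Put $K=\max(|t_1|,|t_2|)$ and $H=\{e\in\mathbf F:\kappa(e)>K\}$, and consider the partition of $\mathbf F$ whose blocks are $H$ together with all singletons $\{e\}$ for $e\notin H$. If this partition is a congruence $\theta$ of $\mathbf F$, then $\mathbf C:=\mathbf F/\theta$ is finite with $|\mathbf C|\le|F_{\le K}|+1\le\nu(n,K)+1$; being a quotient of $\mathbf F$ it lies in the variety of $\Sigma\cup\Omega$, hence is a model of $\Sigma\cup\Omega$; and since $t_1^{\mathbf F}(\bar x)$ and $t_2^{\mathbf F}(\bar x)$ are distinct elements of $F_{\le K}$ (each named by the term $t_i$, of size at most $K$), they lie in distinct blocks of $\theta$, so $\mathbf C$ refutes $t_1\approx t_2$ under $x_i\mapsto[\bar x_i]_\theta$. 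Taking $B(t_1,t_2):=\nu(n,\max(|t_1|,|t_2|))+1$ --- a recursive function of $t_1,t_2$, since $n\le|t_1|+|t_2|$ --- then finishes the proof. Concretely, $\mathbf C$ may be pictured on $F_{\le K}$ together with one extra ``overflow'' element $\top$, each operation being evaluated in $\mathbf F$ but replaced by $\top$ whenever an argument or the output escapes $F_{\le K}$, with the absorption laws of $\Omega$ honoured by hand, so that $0\cdot\top=0$, $\top\cc0=0\cc\top=1$, $\exp_2(\top)=\top$, and so on.

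The hard part will be verifying that the partition above really is a congruence; everything else is routine bookkeeping. This amounts to the claim that for each basic operation $f$ and each tuple $\bar a$ over $\mathbf F$ with some entry $a_i$ in $H$, the value $f^{\mathbf F}(\bar a)$ again lies in $H$, unless one of the absorption laws $0\cdot x\approx0$, $x\cc0\approx1$, $0\cc x\approx1$ of $\Omega$ already applies and pins $f^{\mathbf F}(\bar a)$ to a constant not depending on $a_i$ (in which case the congruence condition is immediate). In turn this reduces to a monotonicity property of $\kappa$: passing to a sum, to a product by a nonzero element, to a binomial coefficient with nonzero entries, to a factorial, or to a power of two cannot decrease complexity below $K$. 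On $\mathbb{N}_0$ the underlying inequalities are elementary --- each of $+,\cdot,\cc,!,\exp_2$ is non-decreasing in every argument and, away from the cases governed by $\Omega$, dominates each of its arguments --- and the monotonicity, eventual-dominance and well-ordering properties of combinatorial functions that encode them are precisely the material of Appendix~A. The work is to transfer this to $\mathbf F$: here it matters that $\Sigma$ consists of \emph{valid} equalities, so that although $\mathbf F$ may satisfy far fewer identities than $\mathbb{N}_0$, every identity it does satisfy is valid on $\mathbb{N}_0$ and no complexity-collapsing identity such as $x+y\approx y$ can interfere. I expect to carry this out by replacing the crude term-size complexity with a finer rank --- for instance the eventual-dominance ordinal rank of the function an element represents --- or by first enlarging $F_{\le K}$ to a finite, recursively bounded, downward-closed set, and then checking the $\Omega$-exceptions one operation at a time.
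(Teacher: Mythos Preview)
Your framework is exactly that of the paper: work in the free $\Sigma\cup\Omega$-algebra $\mathbf F$ on the variables of $t_1,t_2$, collapse all ``large'' elements into one block, and verify that the resulting partition is a congruence.  The gap is precisely where you locate it, and it is genuine: with term-size as the complexity measure the partition need not be a congruence.  For a concrete failure, let $\Sigma$ contain the valid law $x!\cdot y!\cdot(x\cc y)\approx(x+y)!$ (or simply take $\Sigma$ to be the full equational theory of $\mathbb{N}_0$) and take $K=4$.  Then $e:=[x!\cdot y!]$ has $\kappa(e)=5>K$, so $e\in H$; but $(x\cc y)\cdot e=[(x+y)!]$ has $\kappa=4\le K$ and lies outside $H$.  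Pick any $e'\in H$ with $(x\cc y)\cdot e'\ne[(x+y)!]$ (for instance $e'=[(x!\cdot y!)!]$): then $e\mathrel{\theta}e'$ but $(x\cc y)\cdot e\not\mathrel{\theta}(x\cc y)\cdot e'$, so $\theta$ is not a congruence.  Multiplication by a short nonzero factor can genuinely compress term size, so your monotonicity claim fails and cannot be rescued by appeal to numerical monotonicity on~$\mathbb{N}_0$.

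The paper's fix is much simpler than the ordinal-rank or downward-closure repairs you sketch, and does not touch Appendix~A.  Replace term size by the \emph{weight} $w(u):=$ the value of $u$ in $\mathbb{N}_0$ under the assignment $v_i\mapsto 3$.  Because every equation in $\Sigma\cup\Omega$ is valid on $\mathbb{N}_0$, this $w$ is a well-defined homomorphism $\mathbf F\to\mathbb{N}_0$, and the congruence check collapses to a few numerical inequalities: each operation, applied to arguments of value $\ge 3$, yields a value at least as large, so if $w(s),w(t)>K$ then $w$ of any translate also exceeds $K$; the edge cases at $0,1,2$ are exactly those absorbed by the identities in $\Omega$.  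Your term-counting function then enters only at the end, to give a recursive bound on the number of $\Omega$-classes of terms of weight at most $K:=\max\{w(t_1),w(t_2),3\}$.  So your instinct to move from term size toward something value-based was right---but evaluate at a single point rather than reaching for eventual dominance.
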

\begin{proof}
Throughout, $m$ will denote the number of variables in $t_1\approx t_2$.
Let $E= \Sigma\cup\Omega$ and $T_m$ be the set of $\mathscr{L}$-terms in variables $v_1,v_2, \dots ,v_m$ modulo equivalence of terms $s_1,s_2$ whenever $E \vdash s_1\approx s_2$.  In other words, $T_m$ is (isomorphic to) the $m$-generated free algebra in the variety defined by $E$, and so $t_1 \neq t_2$ in $T_m$ (or at least, the equivalence classes containing $t_1$ and $t_2$) means the same as $E\nvdash t_1\approx t_2$. 
Define a \emph{weight} function $w: \mathscr{L} \rightarrow N$ on terms by $w(0):=0 ,w(1):=1$, $w(v_i):=3$ (for each $i=1,\dots,m$) and by $w(t_1 \bbox t_2):=w(s_1)\bbox w(s_2)$ for $\bbox \in \{+,\cdot,\cc\}$, and $w(\bbox s):=\bbox(w(s))$ for $\bbox\in \{!,\exp_2\}$.  As the equalities in $E$ are valid in~$\mathbb{N}$ by assumption, it follows that $w$ is a well defined function $w: T_m \rightarrow N$.
  
Now inductively define a sequence $b_1(m),b_2(m),\dots$ as follows. Let $b_1(m)=2$, $b_2(m)=3$, $b_3(m)=5+m$ and let  $b_{k+1}(m)=3\cdot b_k(m)+3 \cdot (b_k(m))^2$ for $k\geq 3$.  
The intention is that $b_k(m)$ is an upper bound on the number of terms modulo $\Omega$ that have weight at most $k$.  
We now explain why this is so.   
For $b_1(m)=2$, observe that the laws in $\Omega$ and the definition of $w$ implies that the terms of weight at most $3$ are:
\[
\stackrel{\text{weight 1}}{\overbrace{0, 1}},\quad  \stackrel{\text{weight 2}}{\overbrace{1+1}},\quad  \stackrel{\text{weight 3}}{\overbrace{1+(1+1),(1+1)+1,v_1,\dots,v_m}}.
\]
For the inductive case, where $k\geq 3$, a term of weight at most $k+1$ either has weight at most $k$ (there are at most $b_k(m)$ of them), or has weight exactly $k+1$ and is of the form $s_1 \bbox s_2$ for some terms $s_1$, $s_2$ of weight at most $k$ (where $\bbox \in \{+,\cdot,\cc\}$),  or of the form $\bbox(s)$ for some term $s$ of weight at most $k$ (and where $\bbox\in \{!,\exp_2\}$).  
There are at most $3 \cdot (b_k(m))^2+2\cdot b_k(m)$ such possibilities, giving at most $3\cdot b_k(m)+3 \cdot (b_k(m))^2$ terms of weight at most $k+1$, as required.
Thus $\{s \in T_m \mid w(s)\leq j\}$ has at most $b_j(m)$ members. 
Let $K:=\max\{w(t_1),w(t_2),3\}$; we show that $B(t_1,t_2):=b_K(m)+1$ suffices.

Define an equivalence relation $\equiv$ on $T_m$ by $s_1\equiv s_2$ if $s_1$ and $s_2$ are already equivalent in $T_m$, or if $w(s_1), w(s_2)\geq K+1$.  
There are at most $B(t_1,t_2)=b_K(m)+1$ equivalence classes of $\equiv$ on $T_m$.   
The relation $\equiv$ is a congruence, as we now explain.  Assume $s\equiv t$.  If $s$ and $t$ are equivalent in $T_m$ then the stability of $\equiv$ holds trivially, so we assume that $w(s), w(t)\geq K+1$.  
For a unary operation $\bbox\in \{!,\exp_2\}$, the weight of $\bbox(s)$ and $\bbox(t)$ is strictly greater than $K$, because $\bbox$ is strictly increasing on inputs greater than $2$, and the weight of $s$ and $t$ is at least $3$.  
There are more cases when $\bbox$ is a binary operation in $\{+,\cdot,\cc\}$, but the idea is very similar.  
If~$u$ is a non-constant element of $T_m$, or a constant greater than $1$, then the weight of  $u\bbox s$, $s\bbox u$, $u\bbox t$, $t\bbox u$ is again greater than $K$, so that $u\bbox s\equiv u\bbox t$ and $s\bbox u\equiv t\bbox u$. 
When $u=1$, the weight also increases, except in the case of $\bbox=\cdot$.  
But $1\cdot s=s\cdot 1=s\cdot 1\equiv t=t\cdot 1$ anyway (this step uses $\Omega$), so again $\equiv$ is stable under translation by $u$.  
Stability similarly holds when $u=0$ and $\bbox$ is $+$.  
When $\bbox\in \{\cc,\cdot\}$, the laws in $\Omega$ ensure that $0\bbox s=s\bbox 0=c=0\bbox t=t\bbox 0$ for $c\in \{0,1\}$.  
So $\equiv$ is a congruence.  

To complete the proof, recall that $t_1\not\equiv t_2$, so lie distinct congruence classes in the quotient $T_m/{\equiv}$, which is then a  model of $E$ on at most $B(t_1,t_2)$-elements that fails $t_1\approx t_2$.  
\end{proof}
\begin{cor}\label{cor:entailment}
The equational logical entailment operator $\vdash$ for finite antecedents true on $\mathbb{N}$ is decidable, provided that the laws in $\Omega$ are known consequences.
\end{cor}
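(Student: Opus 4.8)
The plan is to read this off Lemma~\ref{lem:gurevic} by a bounded search through finite models. Fix a finite set $\Sigma$ of equations valid on $\mathbb{N}$ (or $\mathbb{N}_0$) for which $\Omega$ is a known consequence; then $\Sigma$ and $\Sigma\cup\Omega$ have the same equational consequences, so it suffices to decide $\Sigma\cup\Omega\vdash t_1\approx t_2$. First I would compute the number $N:=B(t_1,t_2)$ supplied by Lemma~\ref{lem:gurevic}, which is possible since $B$ is recursive. Next I would enumerate, for each $n=1,\dots,N$, the finitely many algebras $\mathbf{A}$ in the signature $\mathscr{L}$ with underlying set $\{1,\dots,n\}$ (there are only finitely many choices of the two constants, two unary operations and three binary operations), and for each one test whether $\mathbf{A}$ satisfies every equation of the finite set $\Sigma\cup\Omega$ and whether $\mathbf{A}$ fails $t_1\approx t_2$; both tests are decidable, since verifying an equation on a finite algebra amounts to checking the finitely many assignments of its variables. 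The output is ``$\Sigma\nvdash t_1\approx t_2$'' if some inspected $\mathbf{A}$ passes both tests, and ``$\Sigma\vdash t_1\approx t_2$'' otherwise.

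Correctness would be immediate from Lemma~\ref{lem:gurevic}. If the search finds $\mathbf{A}\models\Sigma\cup\Omega$ of size at most $N$ with $\mathbf{A}\not\models t_1\approx t_2$, then $\mathbf{A}\models\Sigma$ and soundness of equational logic gives $\Sigma\nvdash t_1\approx t_2$. Conversely, if $\Sigma\cup\Omega\nvdash t_1\approx t_2$, then Lemma~\ref{lem:gurevic} (applied with this $\Sigma$, whose equations are valid on $\mathbb{N}$ by hypothesis) produces a model of $\Sigma\cup\Omega$ of cardinality at most $B(t_1,t_2)=N$ on which $t_1\approx t_2$ fails; after relabelling its universe as $\{1,\dots,n\}$ for the appropriate $n\le N$ it is one of the algebras examined, so the search succeeds. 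Since the search always halts, this is a decision procedure, and by the proviso its verdict for $\Sigma\cup\Omega$ is the verdict for $\Sigma$.

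I do not expect a genuine obstacle here: the whole content sits in Lemma~\ref{lem:gurevic}, and the remaining ingredients --- recursiveness of $B$, effective enumeration of the finite algebras of bounded size, and decidability of equational satisfaction on a finite algebra --- are routine. The two points I would be careful to flag are (a) the proviso that $\Omega$ be entailed by $\Sigma$, which is exactly what licenses replacing $\Sigma$ by $\Sigma\cup\Omega$ so that Lemma~\ref{lem:gurevic} applies; and (b) that the hypothesis ``$\Sigma$ is true on $\mathbb{N}$'' is a standing assumption on the admissible antecedents rather than something the algorithm checks --- indeed deciding membership of an arbitrary equation in the theory of $\langle\mathbb{N}_0;\mathscr{L}\rangle$ is not known to be decidable, which is precisely why the restriction to antecedents already known to be valid is needed.
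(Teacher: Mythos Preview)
Your proposal is correct and follows essentially the same approach as the paper: use the recursive bound $B(t_1,t_2)$ from Lemma~\ref{lem:gurevic}, enumerate finite algebras up to that size, and check which models of $\Sigma$ (equivalently $\Sigma\cup\Omega$) satisfy $t_1\approx t_2$. Your write-up is more detailed than the paper's two-sentence proof, but the argument is the same.
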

\begin{proof}
This follows from the recursive bound $B$ in Lemma~\ref{lem:gurevic}. Given a finite set $\Sigma$ of valid equations for $\mathbb{N}$ with known consequence $\Omega$, to decide if $\Sigma\vdash t_1\approx t_2$, enumerate models up to size $B(t_1,t_2)$, verify that all that satisfy $\Sigma$ (hence $\Omega$ also) satisfy $t_1\approx t_2$.
\end{proof}
The set $\Sigma$ in Lemma~\ref{lem:gurevic} does not need to be finite, or even recursive, even if the finite case is required for Corollary~\ref{cor:entailment}.  This gives rise to a connection between the combinatorial algebra problem and the decidability of the equational theory.  First recall, from McNulty, Szek\'ely and Willard~\cite{MSzW} for example, that for a variety of algebras  $V$ (or for an algebra generating $V$), the \emph{equational complexity} function $\beta_V:\mathbb{N}\to\mathbb{N}_0$ assigns $n\in \mathbb{N}$ to the smallest number $\ell$ such that an algebra ${\bf A}$ of appropriate signature and of cardinality less than $n$ lies in $V$ if and only if it satisfies all equations of length less than $\ell$ that are true of $V$.  The precise definition of ``length'' is not important for our considerations as anything obvious will suffice; see~\cite{MSzW} for a precise choice.
\begin{obs}\label{obs:decid}
The decidability of the \CAP\ is equivalent to the simultaneous decidability of the equational theory and recursiveness of the equational complexity function for $\langle \mathbb{N}_0;+,\cdot,\cc,!,\exp_2,0,1\rangle$. 
\end{obs}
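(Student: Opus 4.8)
The plan is to prove the two implications separately, abbreviating by $V$ the variety generated by $\langle\mathbb{N}_0;+,\cdot,\cc,!,\exp_2,0,1\rangle$, by $\Eq(\mathbb{N}_0)$ its equational theory, and by $\beta$ its equational complexity function, and recalling that a finite algebra of this signature lies in $V$ exactly when it satisfies every equation of $\Eq(\mathbb{N}_0)$, so that the \CAP\ is precisely the finite membership problem for $V$. For the easy implication, suppose $\Eq(\mathbb{N}_0)$ is decidable and $\beta$ is recursive. Given a finite algebra $\mathbf{A}$ of cardinality $n$, compute $\ell:=\beta(n+1)$; by the defining property of $\beta$, $\mathbf{A}\in V$ if and only if $\mathbf{A}$ satisfies every true equation of length less than $\ell$. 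As there are finitely many equations of length less than $\ell$, and for each of them we may decide both membership in $\Eq(\mathbb{N}_0)$ and satisfaction in $\mathbf{A}$, this settles the \CAP.

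For the converse, start from decidability of the \CAP\ and first extract decidability of $\Eq(\mathbb{N}_0)$ via Lemma~\ref{lem:gurevic}, applied with $\Sigma:=\Eq(\mathbb{N}_0)$: this is a set of valid equalities containing $\Omega$, so $\Sigma\cup\Omega=\Sigma$, and since $\Sigma$ is deductively closed the relation $\Sigma\vdash t_1\approx t_2$ coincides with the statement that $t_1\approx t_2$ is true of $\mathbb{N}_0$. The contrapositive of the lemma then says that $t_1\approx t_2$ is true of $\mathbb{N}_0$ if and only if it holds in every model of $\Sigma$ of cardinality at most the recursively computable bound $B(t_1,t_2)$. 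Since the finite models of $\Sigma$ are exactly the finite members of $V$, i.e.\ the \CAP\ YES-instances, and those of cardinality at most $B(t_1,t_2)$ are finite in number, truth of $t_1\approx t_2$ is decided by enumerating the algebras of that size, using the \CAP\ oracle to retain those in $V$, and checking satisfaction of $t_1\approx t_2$ in each.

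It remains to recover recursiveness of $\beta$ from decidability of the \CAP. I would first note that $\beta(n)$ is finite for every $n$: there are only finitely many algebras of cardinality less than $n$, every such algebra outside $V$ fails some equation of $\Eq(\mathbb{N}_0)$, and $\beta(n)$ need only exceed the lengths of a single such witnessing equation for each of these finitely many non-members. To compute $\beta(n)$, list the algebras of cardinality less than $n$, use the \CAP\ oracle to split them into members and non-members of $V$, and search for the least $\ell$ such that every enumerated non-member of $V$ already fails some equation of $\Eq(\mathbb{N}_0)$ of length less than $\ell$. Each such test is effective, since $\Eq(\mathbb{N}_0)$ has just been shown decidable, equations of bounded length are finite in number, and satisfaction in a finite algebra is decidable; and the finiteness observation guarantees that the search halts, returning $\beta(n)$.

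The step I expect to carry the real weight, as opposed to bookkeeping, is the deduction of decidability of $\Eq(\mathbb{N}_0)$ from the \CAP\ in the converse direction; this is exactly where the recursive bound of Lemma~\ref{lem:gurevic} is indispensable, collapsing the a priori merely co-recursively-enumerable problem of whether $t_1\approx t_2$ is true of $\mathbb{N}_0$ to a finite search over bounded-size algebras adjudicated by the \CAP. Everything else is the familiar interplay between the finite membership problem, the equational complexity function, and decidability, in the spirit of~\cite{MSzW}.
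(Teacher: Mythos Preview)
Your proof is correct and follows essentially the same approach as the paper's: both directions match, with the key step being the use of Lemma~\ref{lem:gurevic} (applied with $\Sigma=\Eq(\mathbb{N}_0)$) to decide the equational theory from a \CAP\ oracle, followed by the natural search procedure for $\beta(n)$ using the now-available decidability of $\Eq(\mathbb{N}_0)$. The only cosmetic difference is that you present the implications in the opposite order and phrase the computation of $\beta(n)$ as a direct search for the least $\ell$, whereas the paper finds a minimal witnessing equation for each non-member and takes one more than the maximum length; these are trivially equivalent.
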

\begin{proof}
For the forward implication, assume the \CAP\ is decidable.  To decide if $s\approx t$ is valid, decide which finite algebras up to cardinality $B(s,t)$ (from Lemma~\ref{lem:gurevic}) are positive instances of the \CAP;  
then $s\approx t$ is a valid law if and only if all satisfy $s\approx t$.  To calculate $\beta(n)$, enumerate all NO instances of the \CAP\ up to size $n-1$: each fails some valid equation of $\langle \mathbb{N}_0;+,\cdot,\cc,!,\exp_2,0,1\rangle$.  So, for each such algebra~${\bf A}$ we may enumerate all possible laws, in incrementally increasing length, verify their validity on $\langle \mathbb{N}_0;+,\cdot,\cc,!,\exp_2,0,1\rangle$ and find if they fail on ${\bf A}$.  We are guaranteed that one will be found, of minimal length, for each ${\bf A}$.  The value of $\beta(n)$ is one more than the worst case length.

For the reverse direction, assume the computability of both the equational theory and of $\beta$, and for a size  input algebra ${\bf A}$ to the \CAP, enumerate all laws of length less than $\beta(|A|+1)$, verify which are valid on $\langle \mathbb{N}_0;+,\cdot,\cc,!,\exp_2,0,1\rangle$ and test each for satisfaction in ${\bf A}$.  Then ${\bf A}$ is a YES instance of the \CAP\ if and only if all of the tested laws are satisfied.
\end{proof}
Obviously the Turing equivalence established in the proof is hopelessly inefficient, so if decidability holds, then finer grained complexity theoretic understanding is of interest.
A trivial variant of Observation~\ref{obs:decid} can also be made for the exponential algebra problem with respect to $\langle \mathbb{N};+,\cdot,\uparrow,1\rangle$.  Here, the known decidability of the equational theory~\cite{mac} implies that the class of negative instances of the \EAP\ is recursively enumerable.  
\begin{problem}
Establish the decidability \up(or undecidability\up) of the  \EAP\ or the \CAP\ for nontrivial combinations of these operations.
\end{problem}

\section{A finite model without a finite identity basis}\label{sec:A}
In this section we provide an example of a 5-element algebra without a finite identity basis in any combination of operations amongst $\{+,\cdot,\cc,\uparrow,\exp_2,!,0,1\}$ (provided that $+,\cdot$ are included), and which lies within the variety of $\mathbb{N}_0$ endowed with corresponding operations.  Our example is based on example $S_7$ and $S_7^0$ explored in~\cite{JRZ} and~\cite{WRZ}, but neither $S_7$ nor $S_7^0$ accommodate a compatible definition of~$\uparrow$, because they satisfy the law $1+1\approx 1$ and 
\[
{\HSI}\cup\{1+1\approx 1\}\vdash x\cdot x\approx x^{1+1}\approx x^1\approx x;
\] 
yet $S_7$ and $S_7^0$ are not multiplicatively idempotent.  We instead consider the following algebra ${\bf B}$ in the signature $\{+,\cdot,1\}$ or $\{+,\cdot,0,1\}$:
\[
\begin{tabular}{c|ccccc}
$+$&$0$&$1$&$2$&$a$&$\infty$\\
\hline
0&0&1&$2$&$a$&$\infty$\\
1&1&2&$2$&$\infty$&$\infty$\\
2&2&2&$2$&$\infty$&$\infty$\\
$a$&$a$&$\infty$&$\infty$&$a$&$\infty$\\
$\infty$&$\infty$&$\infty$&$\infty$&$\infty$&$\infty$\\
\end{tabular}\qquad
\begin{tabular}{c|ccccc}
$\cdot$&$0$&$1$&$2$&$a$&$\infty$\\
\hline
0&0&0&0&0&0\\
1&0&1&$2$&$a$&$\infty$\\
2&0&2&$2$&$a$&$\infty$\\
$a$&0&$a$&$a$&$\infty$&$\infty$\\
$\infty$&0&$\infty$&$\infty$&$\infty$&$\infty$\\
\end{tabular}
\]
We extend this to $\uparrow$ according to the following table:
\[
\begin{tabular}{c|ccccc}
$\uparrow$&$0$&$1$&$2$&$a$&$\infty$\\
\hline
0&1&0&0&0&0\\
1&1&1&$1$&$1$&$1$\\
2&1&2&$2$&$\infty$&$\infty$\\
$a$&1&$a$&$\infty$&$\infty$&$\infty$\\
$\infty$&1&$\infty$&$\infty$&$\infty$&$\infty$\\
\end{tabular}
\]
We will further extend ${\bf B}$ to include the operations $\cc$, $!$, and $\exp_2$ as follows.
\[
\begin{tabular}{c|ccccc}
$\cc$&$0$&$1$&$2$&$a$&$\infty$\\
\hline
0&1&1&1&1&1\\
1&1&2&$2$&$\infty$&$\infty$\\
2&1&2&$2$&$\infty$&$\infty$\\
$a$&1&$\infty$&$\infty$&$\infty$&$\infty$\\
$\infty$&1&$\infty$&$\infty$&$\infty$&$\infty$\\
\end{tabular}
\qquad
\begin{tabular}{c|c}
$x$&$x!$\\
\hline
0&1\\
1&1\\
2&2\\
$a$&$\infty$\\
$\infty$&$\infty$
\end{tabular}
\qquad
\begin{tabular}{c|c}
$x$&$\exp_2(x)$\\
\hline
0&1\\
1&2\\
2&2\\
$a$&$\infty$\\
$\infty$&$\infty$\\
\end{tabular}
\]
\begin{obs}\label{obs:subsig}
Observe that $\exp_2(x):=1+x$ is a term function of the operations $+,\cdot$, while operations $!$ and $\cc$ are ``nearly'' term functions: we have $x!=x^2$ except for when $x=0$ \up(and also $x!=1+x$ except when $x=1$\up{),} while $x\cc y=1+x+y$ except when $0\in \{x,y\}$.
\end{obs}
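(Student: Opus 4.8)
The plan is to prove all three assertions by direct comparison of the operation tables defining $\mathbf{B}$; the only care needed is in isolating the exceptional arguments. I would start with $\exp_2$: the row of the $+$-table labelled $1$ records the unary term function $x\mapsto 1+x$ as $(1+0,1+1,1+2,1+a,1+\infty)=(1,2,2,\infty,\infty)$, which coincides with the $\exp_2$-column. Hence $\exp_2(x)=1+x$ holds identically on $\mathbf{B}$, so $\exp_2$ is literally the $\{+,1\}$-term function $1+x$ there.

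For factorial I would compare the $!$-column $(1,1,2,\infty,\infty)$ against two candidate term functions. The diagonal of the $\cdot$-table is $(0\cdot0,\,1\cdot1,\,2\cdot2,\,a\cdot a,\,\infty\cdot\infty)=(0,1,2,\infty,\infty)$, realising $x\mapsto x\cdot x$; this agrees with the $!$-column on $\{1,2,a,\infty\}$ and disagrees only at $x=0$, where $0!=1\neq0=0\cdot0$. Comparing instead with the values of $1+x$ found above, $(1,2,2,\infty,\infty)$, gives agreement on $\{0,2,a,\infty\}$ and a single discrepancy at $x=1$, where $1!=1\neq2=1+1$. This establishes both parenthetical claims at once.

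For $\cc$ I would verify $x\cc y=1+x+y$ on the sixteen pairs $x,y\in\{1,2,a,\infty\}$ (using, say, the bracketing $(1+x)+y$). It helps to record the behaviour of $+$ on these inputs: $1+1=1+2=2+2=2$, while every sum having an argument in $\{a,\infty\}$ equals $\infty$ except for the anomaly $a+a=a$, which is in turn absorbed since $1+a=\infty$. Consequently $1+x+y=2$ when $x,y\in\{1,2\}$ and $1+x+y=\infty$ for all other non-zero $x,y$; deleting the first row and first column of the $\cc$-table exhibits precisely this pattern. The axes $x=0$ and $y=0$ are genuine exceptions, since $x\cc0=0\cc x=1$ whereas $1+x+0$ need not be $1$ (for instance $1+2+0=2$).

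I do not anticipate any real obstacle: every step is a finite comparison of tables. The single point deserving attention is the value $a+a=a$, which must be handled so that it does not spoil the tidy description ``$1+x+y=\infty$ away from the finite part'' used for $\cc$; once one notes that $1+a=\infty$ swallows it, the remaining verification is mechanical.
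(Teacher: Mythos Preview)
Your verification is correct and is exactly what the paper intends: the observation is stated without proof and is meant to be checked directly against the operation tables for $\mathbf{B}$, which is precisely what you do. The only minor comment is that your reading of $x^2$ as $x\cdot x$ is appropriate (and coincides with $x\uparrow 2$ on $\mathbf{B}$ anyway), and your remark that the axes $x=0$, $y=0$ are ``genuine exceptions'' is slightly stronger than needed---at $(0,0)$ both sides happen to equal $1$---but this does not affect the claim, which only asserts agreement off the axes.
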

For a subsignature $\tau$ of $\{+,\cdot,\uparrow,\cc,!,\exp_2,0,1\}$, we let ${\bf B}_\tau$ denote the reduct of ${\bf B}$ to $\tau$.
In the case of the signature $\{+,\cdot,\uparrow,1\}$ (as in Burris and Lee~\cite{burlee92}), it is easier to consider the subalgebra of ${\bf B}$ on the universe $\{1,2,a,\infty\}$, where $0$ is omitted from the universe and the signature. 
We call this structure ${\bf B}^-$, and extend the notation for reducts to ${\bf B}_\tau^-$.  
We now state the main theorem.
\begin{thm}\label{thm:main}
For each combination $\tau$ with 
\[
\{+,\cdot,0,1\}\subseteq \tau\subseteq \{+,\cdot,\uparrow,\cc,!,\exp_2,0,1\}
\] 
the algebra ${\bf B}_\tau$ \up(or ${\bf B}_\tau^-$ if $0$ is not required\up) lies in the variety of $\langle \mathbb{N}_0,\tau\rangle$ and has no finite axiomatisation for its equational theory.
\end{thm}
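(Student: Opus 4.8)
I would first reduce this to a single signature: an identity of the signature $\tau$ that is valid on $\langle\mathbb{N}_0,\tau\rangle$ is, read verbatim, valid on $\langle\mathbb{N}_0,+,\cdot,\uparrow,\cc,!,\exp_2,0,1\rangle$, so it is enough to show that ${\bf B}$ with all the operations satisfies every identity of the latter algebra (and likewise for ${\bf B}^-$), since every reduct then inherits validity of the corresponding identities. By Observation~\ref{obs:subsig}, on ${\bf B}$ the operation $\exp_2$ is the term $1+x$, and $!$, $\cc$ agree with the terms $x\cdot x$ and $1+x+y$ except when a variable takes the value $0$; a finite case split on which variables are assigned $0$ (or $1$) therefore reduces verification of an arbitrary identity to verification of identities over $\{+,\cdot,\uparrow,0,1\}$ on ${\bf B}$ and over $\{+,\cdot,\uparrow,1\}$ on ${\bf B}^-$. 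For these I would present ${\bf B}$ (respectively ${\bf B}^-$) as a homomorphic image of a subalgebra of a power of $\langle\mathbb{N}_0,+,\cdot,\uparrow,0,1\rangle$ (respectively $\langle\mathbb{N},+,\cdot,\uparrow,1\rangle$); any such presentation suffices, because every identity valid on $\mathbb{N}_0$ then transfers to the quotient automatically. Since $0$ and $1$ generate only the subalgebra on $\{0,1,2\}$, the point is to choose a tuple $\vec g$ over $\mathbb{N}_0$, destined to map onto $a$, for which the rule $s(\vec g)\mapsto s^{\bf B}(a)$ (for one-variable terms $s$, with $0,1$ read as constants) is well defined --- equivalently, any two one-variable terms inducing the same function on the set of coordinate values of $\vec g$ take equal values at $a$ in ${\bf B}$. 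Because $a$ together with $0$ and $1$ generates ${\bf B}$, such a well-defined rule extends to a surjection onto ${\bf B}$. The only delicate matter is compatibility of the exponentiation table of ${\bf B}$: as noted just before the theorem, this is exactly the obstruction that excludes $S_7$ and $S_7^0$, so the specific tables for ${\bf B}$ must be used here.

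\textbf{No finite basis.} This is the substantive half, and I would follow the strategy used for $S_7$ in~\cite{JRZ} and for $S_7^0$ in~\cite{WRZ}, which rests on high-girth, high-chromatic-number hypergraphs. For each $n$ I would build an identity $\epsilon_n$ valid in ${\bf B}_\tau$ together with an algebra ${\bf A}_n$ of signature $\tau$ that satisfies every identity of ${\bf B}_\tau$ in at most $n$ variables but fails $\epsilon_n$. Here ${\bf A}_n$ is assembled from a hypergraph $H_n$ of girth greater than $g(n)$ and chromatic number greater than $\chi(n)$, for suitable $g,\chi$ tending to infinity (such hypergraphs exist by the classical Erd\H{o}s-type constructions): large girth makes ${\bf A}_n$ ``locally free'', which is what forces all sufficiently short identities true in ${\bf B}_\tau$ to hold in ${\bf A}_n$, while the large chromatic number is precisely what obstructs $\epsilon_n$, an identity designed to express a colouring property that fails when $H_n$ cannot be coloured with few colours. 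Since a finite basis $\Sigma$ of the equational theory of ${\bf B}_\tau$ uses at most $N$ variables for some $N$, we would get ${\bf A}_N\models\Sigma$, hence ${\bf A}_N$ satisfies all consequences of $\Sigma$, including $\epsilon_N$ --- contradicting ${\bf A}_N\not\models\epsilon_N$. Taking the $\epsilon_n$ in the small signature $\{+,\cdot\}$ (or $\{+,\cdot,1\}$ for ${\bf B}^-$) while building the ${\bf A}_n$ over the full signature makes the argument work for every admissible $\tau$ simultaneously; and for $\{+,\cdot,0\}$ the construction should collapse onto the four-element algebra of~\cite{WRZ}, recovering their result.

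\textbf{Where the difficulty lies.} The hard part is the no-finite-basis half. The $S_7$ machinery must be carried to a richer signature and, unlike $S_7$ and $S_7^0$, to an algebra that is not additively idempotent: the identities $\epsilon_n$ must be assembled from terms whose values remain within the additively idempotent part $\{2,a,\infty\}$ of ${\bf B}$, and one must equip the hypergraph algebras ${\bf A}_n$ with interpretations of $\uparrow,\cc,!,\exp_2$ for which the step ``large girth implies short identities hold'' still goes through --- where Observation~\ref{obs:subsig} once more shifts most of the weight onto the behaviour of $+$ and $\cdot$. A subsidiary difficulty is the exponentiation bookkeeping needed to pin down the membership argument.
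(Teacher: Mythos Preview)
Your outline of the nonfinite-basis half is essentially the paper's approach: build ${\bf B}_{\mathbb H}$ from a 3-uniform hypergraph $\mathbb H$ (extending the $S_{\mathbb H}$ of~\cite{JRZ} by ${\bf 0},{\bf 1},{\bf 2}$ and the extra operations, which, as you note, add no generating power beyond $\infty$), use high chromatic number to force failure of a single identity $\tau_{\mathbb H}$ of~${\bf B}$ (encoding the absence of a homomorphism to the positive 1-in-3SAT template), and use high girth so that every $n$-generated subalgebra sits inside some ${\bf B}_{\mathbb F}$ with $\mathbb F$ a hyperforest, hence in the variety of~${\bf B}$. The observation that $\tau_{\mathbb H}$ lives in $\{+,\cdot\}$ while the construction is done in the full signature, so that all admissible $\tau$ are handled at once, is exactly how the paper organises the endgame.

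The membership half has a gap. Your ``case split on which variables are assigned $0$'' does not reduce a full-signature identity valid on $\mathbb N_0$ to $\{+,\cdot,\uparrow,0,1\}$-identities valid on $\mathbb N_0$. The operations $!$ and $\cc$ are applied to \emph{subterms}, and whether a subterm evaluates to $0$ in ${\bf B}$ is not determined by which top-level variables are $0$; more seriously, even if one could unfold everything on~${\bf B}$ using the piecewise descriptions of Observation~\ref{obs:subsig}, the resulting $\{+,\cdot,\uparrow,0,1\}$-identity need not be valid on $\mathbb N_0$ (where $!$ and $\cc$ are genuinely different from $x^2$ and $1+x+y$), so there is nothing to transfer via your HSP presentation in the smaller signature.

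The paper simply skips the reduction and carries out your $\vec g$ idea directly in the full signature: it shows ${\bf B}$ is a quotient of the $1$-generated free algebra ${\bf N}_0[x]$ (the subalgebra of ${\bf N}_0^{\mathbb N_0}$ generated by the identity function together with $0,1$) by explicitly naming the five congruence blocks $\{0\}$, $\{1\}$, $\mathbb N_0\setminus\{0,1\}$, $\{nx:n\in\mathbb N\}$, and ``everything else'', and checking block-by-block that each operation (including $!$, $\cc$, $\exp_2$) respects this partition. Your $\vec g$ proposal is exactly this once you drop the detour through the smaller signature.
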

The proof of this theorem covers the remainder of the section.  A number of preliminary results and definitions are encountered before the proof is finalised.

The following proposition covers the first claim of Theorem~\ref{thm:main}, and is trivially adapted to cover the case of membership of ${\bf B}^-$ in the variety of $\langle\mathbb{N},+,\cdot,\cc,\uparrow,\exp_2,!,1\rangle$ (simply drop $0$ and the block $B_0$ in the proof).  It can also be adapted to include any combinations of other fixed integer base exponentiation $\exp_b$ for $b>1$, by setting $\exp_b(x)=\exp_2(x)$ (which can similarly be included in Theorem~\ref{thm:main}).
\begin{pro}\label{pro:in}
The algebra ${\bf B}=\langle \{0,1,2,a,\infty\},+,\cdot,\uparrow,\cc,!,\exp_2,0,1\rangle$ satisfies all equations of ${\bf N}_0=\langle\mathbb{N}_0,+,\cdot,\uparrow,\cc,!,\exp_2,0,1\rangle$, where $0^0:=1$ on $\mathbb{N}_0$.
\end{pro}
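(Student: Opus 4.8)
The plan is to exhibit, for each operation in the signature, a surjective homomorphism from $\mathbf{N}_0$ onto $\mathbf{B}$ (or a retraction-type map), since every equation true on $\mathbf{N}_0$ would then automatically be inherited by the homomorphic image $\mathbf{B}$. Concretely, I would define a map $\varphi\colon\mathbb{N}_0\to\{0,1,2,a,\infty\}$ that sends $0\mapsto 0$, $1\mapsto 1$, $2\mapsto 2$, and partitions the remaining integers $\{3,4,5,\dots\}$ into two blocks --- one block $B_a$ mapping to $a$ and one block $B_\infty$ mapping to $\infty$ --- in such a way that $\varphi$ respects $+,\cdot,\uparrow,\cc,!,\exp_2$. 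Looking at the tables, $a$ behaves like an ``idempotent large-ish'' element ($a+a=a$, $a\cdot a=\infty$, $a\cc a=\infty$) whereas everything genuinely large collapses to $\infty$; so the natural guess is to let $B_a$ be a single well-chosen residue-type class or a sparse set and $B_\infty=\{3,4,\dots\}\setminus B_a$, mirroring exactly how $S_7$ and $S_7^0$ are obtained in~\cite{JRZ,WRZ}. In fact the cleanest route is: take the three-element (or four-element with $0$) congruence on $\langle\mathbb{N}_0,+,\cdot\rangle$ underlying $S_7^0$, which already identifies $\mathbf{B}_{\{+,\cdot,0\}}$ as a quotient, and then check that this same congruence is compatible with the remaining operations $\uparrow,\cc,!,\exp_2$.

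The key steps, in order, are as follows. First, fix the partition of $\mathbb{N}_0$ explicitly (say $0,1,2$ as singletons, $a\leftrightarrow\{3\}$ and $\infty\leftrightarrow\{4,5,6,\dots\}$, or whatever the $S_7^0$ construction dictates) and confirm it is a congruence for $+$ and $\cdot$ by citing~\cite{WRZ} (or by the short direct check: the only nontrivial facts are that $2+2$ and $2\cdot 2$ land outside $\{0,1,2\}$ in the ``$2$'' block, that $a+a$ stays small-ish, and that $a\cdot a$ and $a+\text{(anything}\geq 1)$ go to $\infty$). Second, verify the congruence is compatible with $\exp_2$: by Observation~\ref{obs:subsig}, $\exp_2(x)=1+x$ on $\mathbf{B}$, and on $\mathbb{N}_0$ we have $2^x\neq 1+x$ in general, so this step genuinely requires checking $\varphi(2^n)$ against $1+\varphi(n)$ for $n=0,1,2$ and for $n$ in each big block --- here $2^0=1,2^1=2,2^2=4$, and $2^n$ for $n\geq 3$ is large, matching $1+0=1$, $1+1=2$, $1+2$ (which in $\mathbf{B}$ is $2$, and indeed $2^2=4\mapsto\infty$... so one must be careful: $1+2$ should be read in $\mathbf{B}$ where $1+2=2$, but $2^2=4\mapsto\infty\neq 2$). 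This tension tells me the partition must actually put $4$ (hence $2^2$) into whatever block makes $\exp_2(2)=2$ hold, i.e. the block of $2$ --- so the true construction assigns $\{2\}\cup\{\text{some set}\}$ a richer structure, and I would extract the exact recipe from~\cite{JRZ,WRZ} rather than guess. Third, check $!$: by Observation~\ref{obs:subsig}, $x!=x^2$ except at $x=0$, so once $\cdot$ is handled, $!$ follows from $0!=1$ and $\varphi(n!)=\varphi(n)^2$ for $n\geq 1$ --- with $1!=1,2!=2,n!$ large for $n\geq 3$. Fourth, check $\cc$: again by Observation~\ref{obs:subsig} $x\cc y=1+x+y$ off the axes, so compatibility reduces to the $+$-congruence plus the boundary rows/columns where $0$ appears, for which $\binom{0+y}{y}=1$ and $\varphi$ sends that to $1$. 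Fifth, handle $\uparrow$ directly from its table: $0^0=1$ (matching the stipulation), $0^n=0$ for $n\geq 1$, $1^n=1$, $2^0=1,2^1=2$, $2^n$ large for $n\geq 2$, $a^0=1,a^1=a,a^n=\infty$ for $n\geq 2$ --- i.e. for base $b\geq 3$, $b^0=1$, $b^1\in$ its own block, $b^n$ large for $n\geq 2$ --- which matches the table provided $\varphi$ is consistent about the base-$2$ versus base-$\geq 3$ dichotomy. Finally, observe that $\varphi$ is onto, so $\mathbf{B}\cong\mathbf{N}_0/\ker\varphi$ is a homomorphic image and hence satisfies every equation of $\mathbf{N}_0$.

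I expect the main obstacle to be pinning down the correct partition of the integers: the operations $\exp_2$, $\uparrow$ and $\cdot$ pull in incompatible directions about where $4=2^2$ and small powers should go, and the tables as printed encode a specific, non-obvious choice (essentially the one from~\cite{JRZ,WRZ} that was engineered precisely so that $S_7$/$S_7^0$ sits inside the variety of $\langle\mathbb{N}_0,+,\cdot\rangle$). Once that partition is identified and shown to be a $\{+,\cdot\}$-congruence, the remaining verifications for $!,\cc,\exp_2$ are essentially immediate from Observation~\ref{obs:subsig}, and $\uparrow$ is a short finite case analysis on the base. So the proof is structurally a routine ``quotient lies in the variety'' argument; the only real content is bookkeeping to confirm the five-element quotient is well defined across all seven operations simultaneously, and I would present it as: (i) recall/verify the $\{+,\cdot,0\}$-congruence; (ii) dispatch $\exp_2,!,\cc$ via Observation~\ref{obs:subsig} together with their defined boundary values; (iii) dispatch $\uparrow$ by cases on the base; (iv) conclude by the homomorphism theorem.
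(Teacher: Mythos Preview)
Your approach has a fatal gap: there is \emph{no} surjective homomorphism $\varphi\colon\mathbb{N}_0\to\mathbf{B}$, so the entire strategy of partitioning the integers collapses before it starts.  To see why, suppose $\varphi(n)=a$ for some $n\in\mathbb{N}_0$.  Since $a+a=a$ in $\mathbf{B}$, every multiple $kn$ satisfies $\varphi(kn)=a$ (by induction on $k$, using $\varphi((k{+}1)n)=\varphi(kn)+\varphi(n)=a+a=a$).  In particular $\varphi(n^2)=a$.  But also $\varphi(n^2)=\varphi(n)\cdot\varphi(n)=a\cdot a=\infty$, a contradiction.  The same obstruction is already present for $S_7$ and $S_7^0$: they lie in the \emph{variety} of $\langle\mathbb{N}_0,+,\cdot\rangle$ (i.e.\ in $\mathsf{HSP}(\mathbb{N}_0)$), not in $\mathsf{H}(\mathbb{N}_0)$.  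Your own observation that ``$4=2^2$ should land in the block of $2$'' was the first symptom of this; the tension is not a bookkeeping issue but an impossibility.

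The paper's fix is to work one level up.  Since $0,1$ are constants, $\mathbf{B}$ is generated by the single element $a$, so it suffices to exhibit $\mathbf{B}$ as a quotient of the \emph{one-generated free algebra} $\mathbf{N}_0[x]$ in the variety of $\mathbf{N}_0$ --- concretely, the algebra of functions $\mathbb{N}_0\to\mathbb{N}_0$ generated from the identity function $x$ under all the operations.  The five congruence blocks are then $\{0\}$, $\{1\}$, $\mathbb{N}_0\setminus\{0,1\}$ (the non-$0,1$ constants), $\{nx\mid n\in\mathbb{N}\}$ (the homogeneous linear functions), and ``everything else''.  The element $a$ is the image of the identity function, which explains why $a+a=a$ causes no trouble: $x+x=2x$ stays in the linear block, while $x\cdot x=x^2$ falls into the ``everything else'' block.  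Once you have this picture, the verification that the partition is a congruence for all of $+,\cdot,\uparrow,\cc,!,\exp_2$ is the routine case-check you were anticipating.
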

\begin{proof}
As $0,1$ are constants, the algebra ${\bf B}$ can be generated by $a$, so it suffices to show it is a quotient of the $1$-generated free algebra in the variety generated by~${\bf N}_0$.  
This algebra will be denoted by ${\bf N}_0[x]$, as it consists of the algebra generated in the signature $\{+,\cdot,\uparrow,\cc,!,\exp_2,0,1\}$ over $\mathbb{N}_0$ by the identity function, which we might denote by $x$ (in a standard abuse of notation: really it is the function defined by the rule $x\mapsto x$).  
We do not know the precise structure ${\bf N}_0[x]$, however there is enough that can be established in order to verify that ${\bf B}$ is a quotient of ${\bf N}_0[x]$.  
Let us partition the elements of $\mathbf{N}_0[x]$ into blocks $B_0,B_1,B_2,B_a,B_\infty$, corresponding respectively to: 
\[
\{0\},\quad \{1\}, \quad \mathbb{N}_0\backslash\{0,1\},\quad \{nx\mid n\in\mathbb{N}\},\quad E,
\]
where $E$ denotes everything in 
$\mathbf{N}_0[x]\backslash \left(\mathbb{N}_0\cup \{nx\mid n\in\mathbb{N}\}\right)$.  We now argue that this partitioning forms a congruence relation with respect to all of the operations and that the quotient by this congruence is isomorphic to ${\bf B}$.  For this it is necessary and sufficient to verify, for each $i,j\in\{0,1,2,a,\infty\}$, that $B_i\mathbin{\Box}B_i\subseteq B_{i\mathbin{\Box}j}$ for each operation $\Box\in\{+,\cdot,\uparrow,\cc\}$ and $\Box(B_i)\subseteq B_{\Box(i)}$ for $\Box\in \{\exp_2,!\}$.  These checks are mostly trivial, especially when it is observed that the block $E$  consists of all functions in $\mathbb{N}[x]$ that are either nonlinear, or are linear (but not constant) with non-zero constant coefficient.  As a sample, observe that the following properties are true any block $B\in \{B_0,B_1,B_2,B_a,B_\infty\}$ of the congruence: 
\begin{itemize}
\item $B+E\subseteq E$;
\item $B\cdot E\subseteq E$, unless $B=\{0\}$ in which case $B\cdot E= \{0\}$;
\item $B\cc E\subseteq E$ unless $B=\{0\}$ in which case $B\cc E=\{1\}\subseteq \mathbb{N}$;
\item $E!\subseteq E$;
\item $\exp_2(E)\subseteq E$.
\end{itemize}
The reader should be convinced that the remaining cases are all equally trivial observations.
\end{proof}
The simplicity of the proof of Proposition~\ref{pro:in} is not representative of typical instances of the \CAP\ and \EAP, which are usually quite challenging, even for small algebras~\cite{asa}.
\begin{remark}
If the less popular $0^0=0$ is preferred for $\mathbb{N}_0$, to enable the law $0^x\approx 0$ in preference over $x^0\approx 1$, then the corresponding change can be made in ${\bf B}$ and remaining arguments hold.  We adopt $0^0:=1$ throughout.
\end{remark}

For the nonfinite basis claim of Theorem~\ref{thm:main} we recall a construction from~\cite{JRZ} and prove some extra lemmas.
The algebra ${\bf B}_{\{+,\cdot\}}$ contains a copy of the semiring~$S_7^0$ from~\cite{WRZ} on the elements $\{0,2,a,\infty\}$.
In order to prove the nonfinite axiomatisability theorem for ${\bf B}_\tau$ we invoke the construction from Section~3 of~\cite{JRZ} (see page~225 specifically) built over hypergraphs.  
We will need some modification of the main construction of  \cite{JRZ}, and must define the extra operations beyond $\{+,\cdot\}$, however able to restrict to the case of $3$-uniform hypergraphs, meaning that each hyperedge consists of precisely $3$ points.  
First we recall some basic definitions in hypergraphs.
A \emph{cycle} (of length $n$) is a sequence $v_1,e_1,v_2,\dots, v_{n},e_{n},v_1$ alternating between vertices $v_1,\dots,v_n$ and hyperedges $e_1,\dots,e_n$, where $v_1,\dots,v_n$ are pairwise distinct and $e_1,\dots,e_n$ are pairwise distinct (but not necessarily disjoint) and $v_i\in e_{i-1}\cap e_{i}$ for $i=2,\dots,n$, as well as $v_1\in e_1\cap e_n$.  
The \emph{girth} of a hypergraph is the size of the smallest cycle, if there is one, and $\infty$ otherwise.  A \emph{hyperforest} is a hypergraph of girth $\infty$.  

We now recall the definition from~\cite{JRZ}, though note that the element named $0$ there corresponds to $\infty$ in this paper.
\begin{defn}\label{defn:hypergraph} [See page 225 of \cite{JRZ}.]
Let $\mathbb{H}$ be a $3$-uniform hypergraph of girth at least $5$.  Let $V=V_\mathbb{H}$ denote the vertices and $E=E_\mathbb{H}$ the hyperedges. The \emph{hypergraph semiring} $S_\mathbb{H}$ is the multiplicatively commutative semiring generated by a copy of the vertex set $\{\mathbf{a}_v\mid v\in V\}$ along with a special element $\infty$ and subject to the following properties:
\begin{enumerate}
\item for all $x$ and $y$, if $x=y$ then $x+x=x$ but otherwise $x+y=\infty$,
\item  $\mathbf{a}_u\mathbf{a}_v=\mathbf{a}_v\mathbf{a}_u$ for all vertices $u,v\in V$,
\item $\mathbf{a}_u\mathbf{a}_v=\infty$ if $\{u,v\}$ is not a 2-element subset of a hyperedge in $E$,
\item $\mathbf{a}_{u_1}\mathbf{a}_{v_1}\mathbf{a}_{w_1}=\mathbf{a}_{u_2}\mathbf{a}_{v_2}\mathbf{a}_{w_2}$ whenever $\{u_1,v_1,w_1\}$ and $\{u_2,v_2,w_2\}$ are hyperedges; this single element is denoted by $\mathbf{a}$,
\item $\mathbf{a}_{u_1}\mathbf{a}_{v_1}=\mathbf{a}_{u_2}\mathbf{a}_{v_2}$ whenever there is a vertex $w$ such that both $\{u_1,v_1,w\}$ and $\{u_2,v_2,w\}$ are hyperedges.
\end{enumerate}
\end{defn}
There are several consequences of this definition and its assumptions that are not immediately obvious.  
\begin{enumerate}
\item[(a)] Every product of length more than $3$ is equal to $\infty$.   Moreover, the non-$\infty$ products are precisely those that can be written in the following forms:
\begin{itemize}
\item[(i)] $\mathbf{a}_{u_1}$, for a vertex $u$;
\item[(ii)] $\mathbf{a}_{u_1}\mathbf{a}_{u_2}$ for a $2$-element subset $\{u_1,u_2\}$ of a hyperedge $\{u_1,u_2,u_3\}\in E$;
\item[(iii)] $\mathbf{a}_{u_1}\mathbf{a}_{u_2}\mathbf{a}_{u_3}$ where $\{u_1,u_2,u_3\}\in E$.
\end{itemize} 
\item[(b)] Up to commutativity, the only equalities between non-$\infty$ products are the ones listed in items (3) and (4) of Definition~\ref{defn:hypergraph}.
\end{enumerate}
The proof of these observations depend on the assumption on girth.  The full details are in Lemma 3.4 of~\cite{JRZ} (see also Lemma 3.2 there), but we provide some self-contained intuition by sketching the first part of observation (a).
Allowing for commutativity (Property (2)) and the fact that $\mathbf{a}_{u}\mathbf{a}_u=\infty$ (by Property (3), as $\{u,u\}$ is not a $2$-element subset of a hyperedge), every product of generators that contains a repeat is equal to $\infty$.  
If $\{u,v,w\}$ is a $3$-element set of vertices that is not a hyperedge, then at least one $2$-element subset is not a subset of a hyperedge, or otherwise we obtain a $3$-cycle $u,e_1,v,e_2,w,e_3,u$, where $e_1$, $e_2$, $e_3$ are the assumed hyperedges extending $\{u,v\}$, $\{v,w\}$ and $\{u,w\}$, respectively (these are necessarily pairwise distinct due to the assumption that $\{u,v,w\}$ is not a hyperedge).  
So by Properties (2) and (3) again,  the only non-$\infty$ products of three distinct generators, are those of the form $\mathbf{a}_u\mathbf{a}_v\mathbf{a}_w$ where $\{u,v,w\}\in E$, and all of these are equal by Property (5).  
Similarly, to avoid a 4-cycle, any set of four or more vertices must contain a $3$-element subset that is not a hyperedge, and hence  (by the previous observation) a $2$-element subset that is not a subset of a hyperedge: 
then the corresponding product of four generators is equal to $\infty$ by Properties (2) and (3) again.

The semiring $S_\mathbb{H}$ can be extended to include a multiplicative identity element~$1$ with $1+1=1$, as discussed at~\cite[p.~226]{JRZ}.  We wish to make a different amendment to include ${\bf 0},{\bf 1},{\bf 2}:={\bf 1}+{\bf 1}$, and to define $\uparrow,\cc,!,\exp_2$; note that the~$1$ of~\cite{JRZ} is being split into ${\bf 1}, {\bf 2}$ and in fact it is our element~${\bf 2}$ that behaves identically to the element~$1$ of~\cite{JRZ} due to ${\bf 2}+{\bf 2}={\bf 2}$.  The resulting algebra will be denoted by ${\bf B}_\mathbb{H}$, and we have used boldface for integer values, because during the proofs we will realise them as constant tuples of the same value; so ${\bf 0}$ will become $(0,0,\dots,0)$, where $0\in B$ for example.  On the existing elements $S_\mathbb{H}$ we define $\uparrow,\cc,!,\exp_2$ to be constantly equal to $\infty$, and extend to ${\bf 0},{\bf 1},{\bf 2}$ as follows.  For the operations $+,\cdot,\uparrow$, we define, for arbitrary $b\in S_{\mathbb{H}}$:
\[
\begin{tabular}{c|cccc}
$+$&${\bf 0}$&${\bf 1}$&${\bf 2}$&$b$\\
\hline
${\bf 0}$&${\bf 0}$&${\bf 1}$&${\bf 2}$&$b$\\
${\bf 1}$&${\bf 1}$&${\bf 2}$&${\bf 2}$&$\infty$\\
${\bf 2}$&${\bf 2}$&${\bf 2}$&${\bf 2}$&$\infty$\\
$b$&$b$&$\infty$&$\infty$&$b$\\
\end{tabular}\qquad
\begin{tabular}{c|cccc}
$\cdot$&${\bf 0}$&${\bf 1}$&${\bf 2}$&$b$\\
\hline
${\bf 0}$&${\bf 0}$&${\bf 0}$&${\bf 0}$&${\bf 0}$\\
${\bf 1}$&${\bf 0}$&${\bf 1}$&${\bf 2}$&$b$\\
${\bf 2}$&${\bf 0}$&${\bf 2}$&${\bf 2}$&$b$\\
$b$&${\bf 0}$&$b$&$b$&$\infty$\\
\end{tabular}\qquad
\begin{tabular}{c|cccc}
$\uparrow$&${\bf 0}$&${\bf 1}$&${\bf 2}$&$b$\\
\hline
${\bf 0}$&${\bf 1}$&${\bf 0}$&${\bf 0}$&${\bf 0}$\\
${\bf 1}$&${\bf 1}$&${\bf 1}$&${\bf 1}$&${\bf 1}$\\
${\bf 2}$&${\bf 1}$&${\bf 2}$&${\bf 2}$&$\infty$\\
$b$&${\bf 1}$&$b$&$\infty$&$\infty$\\
\end{tabular}
\]
Notice that while $b+b=b$, as in indicated in the table, for distinct $b_1\neq b_2$ in $S_\mathbb{H}$ we have $b_1+b_2=\infty$.
This is consistent with the definition of these operations on ${\bf B}$, if we now consider  $b\in\{a,\infty\}$ instead of $b\in S_\mathbb{H}$.  
The operations $\cc,\uparrow,!,\exp_2$ will also be defined consistently with ${\bf B}$, based on Observation~\ref{obs:subsig}, as we now detail.  
As ${\bf B}\models \exp_2(x)\approx 1+x$, we make the same definition on ${\bf B}_\mathbb{H}$ (for any $x$).  Similarly, define $\cc$ and $!$ on ${\bf B}_\mathbb{H}$ for arbitrary $x,y$ by
\[
x\cc y:=\begin{cases}
x+y+{\bf 1} &\text{ if $0\notin\{x,y\}$}\\
{\bf 1} &\text{ otherwise}
\end{cases}\qquad \text{and}\qquad
x! = \begin{cases}
x^2&\text{ if $x\neq 0$}\\
{\bf 1} &\text{ otherwise.}
\end{cases}
\]

We now gather and adapt some ideas in \cite{JRZ} and \cite{WRZ}.  
For a $3$-uniform hypergraph~$\mathbb{H}$, with vertices $V_\mathbb{H}$ and hyperedges $E_\mathbb{H}$, define the following law
\[
\left(\sum_{\{u,v,w\}\in E_\mathbb{H}} x_ux_vx_w\right)\approx  \left(\sum_{\{u,v,w\}\in E_\mathbb{H}} x_ux_vx_w\right)+\left(\prod_{v\in V_\mathbb{H}} x_v\right).\tag{$\tau_\mathbb{H}$}\label{eqn:H}
\]
This law is a hybrid of the one in \cite{WRZ}, and those in \cite{JRZ}.    

Let $\mathbbold{2}$ denote the relational structure on $\{a,1\}$ with a single ternary relation given by $\{(a,1,1),(1,a,1),(1,1,a)\}$.  As in \cite{jacham}, we may consider hypergraphs as relational structures in the language of a single symmetric $3$-ary relation by converting each hyperedge $\{u,v,w\}$ to the six permutations of the tuple $(u,v,w)$.  As $\mathbbold{2}$ is a structure in the same signature as 3-uniform hypergraphs, we may consider homomorphisms between hypergraphs and $\mathbbold{2}$: functions from the vertices to $\{a,1\}$ that map tuples in the hyperedge relation onto tuples in the relation $\{(a,1,1),(1,a,1),(1,1,a)\}$ of~$\mathbbold{2}$.
\begin{lem}\label{lem:hypergraphlaw}
For any $3$-uniform hypergraph $\mathbb{H}$ in which there are no isolated vertices, we have ${\bf B}\models \tau_\mathbb{H}$ if and only if there is no homomorphism from $\mathbb{H}$ to $\mathbbold{2}$.
\end{lem}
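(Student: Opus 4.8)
The plan is to reduce validity of $\tau_\mathbb{H}$ in ${\bf B}$ to the colouring condition by a direct analysis of the finite arithmetic. Fix an assignment $\phi$ of the variables $x_v$ into $\{0,1,2,a,\infty\}$ and abbreviate $m_e:=\prod_{v\in e}\phi(x_v)$ for $e\in E_\mathbb{H}$, $s:=\sum_{e\in E_\mathbb{H}} m_e$ and $p:=\prod_{v\in V_\mathbb{H}}\phi(x_v)$, so that $\tau_\mathbb{H}$ becomes $s\approx s+p$. The bookkeeping facts, read off the tables of ${\bf B}$, are: $s+p=s$ whenever $p=0$ or $s=\infty$; among nonzero values $1+1=1+2=2+2=2$ and $a+a=a$, but $1+a=2+a=\infty$; and a product of three entries from $\{1,2,a\}$ equals $a$ precisely when exactly one factor is $a$, and lies in $\{1,2\}$ precisely when no factor is $a$. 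It follows that if $\tau_\mathbb{H}$ fails at $\phi$ then, using that $\mathbb{H}$ has no isolated vertices so that every variable occurs in some $m_e$, no $\phi(x_v)$ equals $0$ or $\infty$, every $m_e$ is finite and nonzero, $s\ne\infty$, and the $m_e$ are either all in $\{1,2\}$ or all equal to $a$.

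For the implication ``no homomorphism $\Rightarrow {\bf B}\models\tau_\mathbb{H}$'' I would argue contrapositively, assuming $\tau_\mathbb{H}$ fails at some $\phi$ and producing a homomorphism $\mathbb{H}\to\mathbbold{2}$. If every $m_e\in\{1,2\}$ then every $\phi(x_v)\in\{1,2\}$, whence $p,s\in\{1,2\}$; the $+$-table gives $s+p=s$ unless $s=1$, which forces $|E_\mathbb{H}|=1$ with the three vertices of the unique hyperedge assigned $1$ --- and a single hyperedge trivially has a homomorphism to $\mathbbold{2}$. If instead every $m_e=a$, then each hyperedge has exactly one vertex assigned $a$, so $p\in\{a,\infty\}$; since $a+a=a$, failure of $\tau_\mathbb{H}$ forces $p=\infty$, i.e.\ at least two vertices receive $a$, and then $h(v):=a$ if $\phi(x_v)=a$ and $h(v):=1$ otherwise sends each hyperedge to a permutation of $(a,1,1)$, hence is a homomorphism.

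For the converse, let $h\colon\mathbb{H}\to\mathbbold{2}$ be a homomorphism and $A:=h^{-1}(a)$; every hyperedge meets $A$ in exactly one point, so $A\ne\emptyset$. If $|A|\ge 2$, set $\phi(x_v):=h(v)$: then every $m_e=a$, so $s=a$, whereas $p=a^{|A|}=\infty$, so $s\ne s+p$. If $A=\{v_0\}$ then $v_0$ lies in every hyperedge; if there is only one hyperedge, assign $1$ to everything, so $s=p=1$ and $s\ne s+p$; if there are at least two hyperedges, then --- using that distinct hyperedges share at most one vertex, hence share exactly $\{v_0\}$ (automatic under the girth hypothesis of Definition~\ref{defn:hypergraph}) --- the ``other two vertices'' of the hyperedges partition $V_\mathbb{H}\setminus\{v_0\}$ into pairs, so assigning $a$ to one chosen vertex of each pair and $1$ to every other vertex makes each $m_e=a$ and makes $p=\infty$ (at least two $a$'s), so again $\tau_\mathbb{H}$ fails.

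The step I expect to be the obstacle is the singleton case $A=\{v_0\}$ of the converse: the homomorphism $h$ itself does not falsify $\tau_\mathbb{H}$, since it yields $p=a=s$, so one must build a different assignment, and this is exactly the point where linearity of $\mathbb{H}$ (no two hyperedges meeting in two vertices) is needed, to see that the star of $v_0$ can be $2$-coloured with at least two vertices coloured $a$. The remaining content is careful case-checking against the $+$- and $\cdot$-tables of ${\bf B}$, which is routine but should be written out in full.
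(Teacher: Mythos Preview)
Your case analysis follows the same route as the paper's proof---a direct check of assignments against the tables---but it is more careful, and in fact your suspicion about the singleton case is not merely an obstacle: it exposes a genuine defect in the lemma as stated.  Take $\mathbb{H}$ to be the three--petal sunflower on $\{v_0,u,w,x\}$ with hyperedges $\{v_0,u,w\}$, $\{v_0,u,x\}$, $\{v_0,w,x\}$.  There are no isolated vertices, and the unique homomorphism to $\mathbbold{2}$ sends $v_0\mapsto a$ and everything else to $1$, so $|A|=1$.  But $\tau_{\mathbb{H}}$ \emph{holds} in ${\bf B}$: running through your own case split, any assignment into $\{1,2\}$ gives $s=2=s+p$ (three summands), the assignment $v_0\mapsto a$ with the rest in $\{1,2\}$ gives $s=a=p$ and $s+p=a$, and every other assignment forces $s=\infty$.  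So the equivalence fails without linearity, and the paper's own argument that ``the assignment $\phi'(x_v)=\phi(v)$ sends the right hand side to $\infty$'' silently assumes at least two vertices receive~$a$.  Your proof also repairs a second slip in the paper: the claim there that an all--$\{1,2\}$ assignment makes both sides equal $2$ misses the single--hyperedge, all--$1$ case, which you correctly isolate.

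The upshot is that your argument is correct \emph{once} the girth hypothesis of Definition~\ref{defn:hypergraph} (or merely linearity) is added to the statement, and this is all the paper needs for Theorem~\ref{thm:main}.  Under linearity your construction in the $|A|=1$, $|E_{\mathbb{H}}|\ge 2$ case actually manufactures a new homomorphism with $|A|\ge 2$, so the singleton case never arises; you could streamline by noting this and then always invoking the $|A|\ge 2$ assignment.  In short: you have spotted that the lemma as written is false, you have supplied what is missing, and your write-up should simply add linearity to the hypotheses and state plainly that the sunflower shows this cannot be dropped.
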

\begin{proof}
Assume there is a homomorphism $\phi:\mathbb{H}\to\mathbbold{2}$.  Then the assignment $\phi':x_v\mapsto \phi(v)$ sends the left hand side of 
$\tau_\mathbb{H}$ to $a$ and the right hand side to~$\infty$.  So~$\tau_\mathbb{H}$ fails if there is a homomorphism from $\mathbb{H}$ to $\mathbbold{2}$.  
Now assume that $\tau_\mathbb{H}$ fails on ${\bf B}$ under some assignment; we show that there exists a homomorphism from $\mathbb{H}$ to $\mathbbold{2}$.  Let $\phi:\{x_v\mid v\in V_\mathbb{H}\}\to \{0,1,2,a,\infty\}$ be the failing assignment.  
Now we must have $\phi(\sum_{\{u,v,w\}\in E_\mathbb{H}} x_ux_vx_w)\neq\infty$, as otherwise both sides are trivially equal (to~$\infty$).  
Thus no variable can be assigned $\infty$, and moreover $\phi(x_ux_vx_w)\neq \infty$ for all hyperedges $\{u,v,w\}$.  
Similarly, no variable can be assigned $0$ as then $\phi(\prod_{v\in V_\mathbb{H}} x_v)=0$ and  both sides would be equal, irrespective of the value of $\phi(\sum_{\{u,v,w\}\in E_\mathbb{H}} x_ux_vx_w)$.  
We likewise cannot have $\phi$ assign all variables within $\{1,2\}$, as then both sides equal~$2$.  
So, at least one variable $x_v$ is mapped to $a$.  
For a hyperedge $\{u,v,w\}$ including the vertex $v$ we must then have $\phi(x_ux_vx_w)=a$, to avoid $\infty$.  As $1+a =2+a=\infty$, to avoid the value $\infty$ for $\phi(\sum_{\{u,v,w\}\in E_\mathbb{H}} x_ux_vx_w)$ we must have $\phi(x_ux_vx_w)=a$ for all hyperedges $\{u,v,w\}\in E_\mathbb{H}$.  
So, for every hyperedge $\{u,v,w\}$, precisely one of $x_u,x_v,x_w$ is mapped by $\phi$ to $a$, and the other two are mapped within $\{1,2\}$.  
Then the map from $V_\mathbb{H}$ to $\{a,1\}$ given by $v\mapsto 1$ if $\phi(x_v)\in \{1,2\}$ and $v\mapsto a$ otherwise is a homomorphism from  $\mathbb{H}$ to $\mathbbold{2}$.
\end{proof}
The structure $\mathbbold{2}$ is the template for the positive 1-in-3SAT problem.
Following~\cite{jac:SAT}, we say that a hypergraph $\mathbb{H}$ is \emph{$\leq 2$-robustly 1-in-3 satisfiable} if for every two element subset $\{u,v\}$ of the vertices of $\mathbb{H}$, every function $f:\{u,v\}\to\{a,1\}$ extends to a homomorphism from $\mathbb{H}$ to $\mathbbold{2}$, except where $\{u,v\}$ is a $2$-element subset of a hyperedge and $f(u)=f(v)=a$ (which trivially cannot extend to a homomorphism). 

The following lemma adapts  \cite[Lemma~4.1]{JRZ} to the extra operations and generators considered here.
\begin{lem}\label{lem:hypergraphproperties}
Let $\mathbb{H}$ be a $3$-uniform hypergraph of girth at least $5$ and without isolated vertices.
\begin{enumerate}
\item 
If $\mathbb{H}$ is $\leq 2$-robustly 1-in-3 satisfiable, then ${\bf B}_{\mathbb{H}}$ lies in the variety of ${\bf B}$.
\item If $\mathbb{H}$ is a hyperforest, then it is $\leq 2$-robustly 1-in-3 satisfiable.
\end{enumerate}
\end{lem}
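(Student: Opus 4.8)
\emph{Strategy.} The two parts need quite different arguments. Part~(2) is a combinatorial induction, while part~(1) adapts the subdirect-type representation underlying \cite[Lemma~4.1]{JRZ}; I sketch part~(2) first.

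For part~(2) I would argue by induction on the number of hyperedges. The key point is that a nonempty hyperforest always has a \emph{pendant} hyperedge $e$, meaning one that meets the union of the remaining hyperedges in at most one vertex: the vertex--hyperedge incidence bipartite graph of a hyperforest is an ordinary forest, and for a longest path in it, the hyperedge at the second node from an endpoint has (at least) two degree-one vertices, hence shares at most one vertex with the rest. Given a two-element seed $\{u,v\}$ and a map $f\colon\{u,v\}\to\{a,1\}$ that does not force two $a$'s into a hyperedge, one peels such an $e$: if $e$ is a whole component the claim is checked directly; otherwise $e$ shares a single vertex $y$ with the rest, one colours the (at most two) degree-one vertices of $e$ so as to respect the part of the seed lying in $e$ and to meet the $1$-in-$3$ condition on $e$ --- this forces a value on $y$ --- and then applies the inductive hypothesis to $\mathbb{H}\setminus e$ with the resulting constraint on $y$ (and on $v$, if $v\notin e$). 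Reassembly is consistent because the overlap is just $y$, on which both assignments agree. Some bookkeeping is needed to track which branch carries a seed vertex; it is convenient to prove the statement simultaneously for seeds of size $0$, $1$ and $2$.

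For part~(1) the goal is ${\bf B}_\mathbb{H}\in\mathsf{HSP}({\bf B})$, witnessed by exhibiting ${\bf B}_\mathbb{H}$ as a quotient of a subalgebra of a power of ${\bf B}$. Let $\mathcal{H}$ be the set of homomorphisms $\mathbb{H}\to\mathbbold{2}$ and let $\hat{\bf B}_\mathbb{H}\le{\bf B}^{\mathcal{H}}$ be the subalgebra generated by the constants together with the ``colouring tuples'' $\hat{\bf a}_v:=(\phi(v))_{\phi\in\mathcal{H}}$ for $v\in V_\mathbb{H}$; trivially $\hat{\bf B}_\mathbb{H}\in\mathsf{SP}({\bf B})$. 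I would then show that $\hat{\bf a}_v\mapsto{\bf a}_v$, constants to constants, extends to a homomorphism $\hat{\bf B}_\mathbb{H}\to{\bf B}_\mathbb{H}$, which is automatically onto since the ${\bf a}_v$ and constants generate ${\bf B}_\mathbb{H}$. Concretely, for $\mathscr{L}$-terms $s,t$ in the variables $\{x_v:v\in V_\mathbb{H}\}$ one must verify: if $s^{{\bf B}}(\phi)=t^{{\bf B}}(\phi)$ for all $\phi\in\mathcal{H}$, then $s$ and $t$ take the same value in ${\bf B}_\mathbb{H}$ under $x_v\mapsto{\bf a}_v$. This is proved by first recalling, from consequences~(a) and~(b) following Definition~\ref{defn:hypergraph} (plus the block $\{{\bf 0},{\bf 1},{\bf 2}\}$ and the stipulated behaviour of $\uparrow,\cc,!,\exp_2$), a normal-form list of the elements of ${\bf B}_\mathbb{H}$ --- the constants ${\bf 0},{\bf 1},{\bf 2}$, the generators ${\bf a}_v$, the hyperedge $2$-products modulo property~(5), the element ${\bf a}$, and $\infty$ --- and then showing that any two distinct such elements are separated by evaluation at some $\phi\in\mathcal{H}$. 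The separations become clean once one observes that for a hyperedge $\{u,v,w\}$ the value $\phi(u)\phi(v)$ in ${\bf B}$ depends only on $\phi(w)$ (it is $1$ if $\phi(w)=a$ and $a$ otherwise, by the $1$-in-$3$ condition); hence comparing the $\mathcal{H}$-profiles of $x_ux_v$ and $x_{u'}x_{v'}$ amounts to comparing the colour functions $\phi\mapsto\phi(w)$ and $\phi\mapsto\phi(w')$, and by $\le 2$-robustness applied to $\{w,w'\}$ these coincide only when $w=w'$, i.e.\ only when the two $2$-products are equal in ${\bf B}_\mathbb{H}$. In the same way $\le 2$-robustness separates the generators from one another, separates a $2$-product from $\infty$ and from ${\bf a}$, and separates the constants from everything else, while ${\bf a}$ itself has the constant profile $a$.

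The main obstacle --- and the genuinely new point relative to \cite{JRZ} --- is keeping this case analysis under control once $\uparrow,\cc,!,\exp_2$ and the split constant $\{{\bf 1},{\bf 2}\}$ are present: one must check that ``decorating'' a term representing a given element of ${\bf B}_\mathbb{H}$ --- multiplying by ${\bf 2}$, raising to ${\bf 1}$, applying $!$ or $\exp_2$, and so on --- never produces a term whose $\mathcal{H}$-profile agrees with that of a term representing a different element, and in particular that the exponential $\uparrow$, whose ${\bf B}$-table diverges sharply from that of $\cdot$ (for instance $2\cdot a=a$ whereas $a\uparrow 2=\infty$), creates no unwanted identifications. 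Observation~\ref{obs:subsig}, which realises $\exp_2$ exactly and $\cc$, $!$ almost as $\{+,\cdot\}$-terms, together with the fact that $\uparrow,\cc,!,\exp_2$ are constantly $\infty$ on the $S_\mathbb{H}$-part, confines this to a finite check, but carrying it out is the bulk of the argument.
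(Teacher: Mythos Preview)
Your sketch for part~(2) is along the right lines and, with the bookkeeping you flag, can be made to work; the paper simply cites \cite[Lemma~3.3]{JRZ} for this item.

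For part~(1) there is a genuine gap. Your colouring tuples $\hat{\bf a}_v=(\phi(v))_{\phi\in\mathcal H}$ take values in $\{1,a\}$, and with this encoding the assignment $\hat{\bf a}_v\mapsto{\bf a}_v$ does \emph{not} extend to a homomorphism once $\uparrow$ is in the signature. Take any $2$-element subset $\{u,v\}$ of a hyperedge. For every $\phi\in\mathcal H$ at most one of $\phi(u),\phi(v)$ equals $a$, so the $\phi$-coordinate of $\hat{\bf a}_u{}^{\hat{\bf a}_v}$ is one of $1^1=1$, $a^1=a$, or $1^a=1$, in each case equal to $\phi(u)$. Hence $\hat{\bf a}_u{}^{\hat{\bf a}_v}=\hat{\bf a}_u$ in $\hat{\bf B}_\mathbb H$, whereas ${\bf a}_u{}^{{\bf a}_v}=\infty\ne{\bf a}_u$ in ${\bf B}_\mathbb H$. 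So the well-definedness condition you isolate fails, and the ``finite check'' you defer cannot go through with this encoding. More broadly, the separation-of-normal-forms strategy is fragile here because a single element of ${\bf B}_\mathbb H$ is represented by many terms with \emph{different} $\mathcal H$-profiles (already $x_v$ and $x_v+x_v$ disagree at any $\phi$ with $\phi(v)=1$), so separating one chosen representative per element does not suffice.

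The paper's fix is to encode $\phi(v)=1$ by the element $2$ rather than $1$: it sets $\mathbf b_v(\phi)=2$ when $\phi(v)=1$ and $\mathbf b_v(\phi)=a$ otherwise. Then every generated tuple other than the constants ${\bf 0},{\bf 1}$ has all coordinates in $\{2,a,\infty\}$, the set $I$ of tuples with an $\infty$-coordinate is a single congruence class, and the quotient is shown to be ${\bf B}_\mathbb H$ by invoking \cite{JRZ} for the $\{+,\cdot\}$ reduct and checking that the extra operations land in $I$ exactly when they return $\infty$ in ${\bf B}_\mathbb H$. With the $\{2,a\}$ encoding your problematic example gives $\mathbf b_u{}^{\mathbf b_v}\in I$ (since $a^2=2^a=\infty$), matching ${\bf a}_u{}^{{\bf a}_v}=\infty$ as required.
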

\begin{proof}
Item (2) is precisely \cite[Lemma 3.3]{JRZ} so we focus on item (1) which is an extension of an argument from \cite{JRZ}.  

Let ${\bf A}_\mathbb{H}$ denote the $\{+,\cdot\}$-subalgebra of ${\bf B}_\mathbb{H}$ on the subuniverse $B_\mathbb{H}\backslash\{{\bf 0},{\bf 1}\}$.  
Let ${\bf A}$ denote the $\{+,\cdot\}$-subalgebra of ${\bf B}$ on $\{2,a,\infty\}$.  
Then ${\bf A}_\mathbb{H}$ is exactly the semiring $S_\mathbb{H}^1$ from \cite{JRZ}, except that in~\cite{JRZ} our element ${\bf 2}$ is denoted by $1$, and our $\infty$ is denoted~$0$.  
Similarly the semiring ${\bf A}$ is just the aforementioned $S_7^1$ from~\cite{JRZ}, subject to the same naming conventions.  The element $2$ in ${\bf A}$ (and ${\bf 2}$ in ${\bf A}_\mathbb{H}$) can be given the status of a constant: it is a multiplicative identity within these semirings, but it is ${\bf 1}$ that plays that role in our ${\bf B}_\mathbb{H}$ (and ${\bf B}$).  
In~\cite{JRZ} it is shown that the $\leq 2$-robustly 1-in-3 satisfiability property of $\mathbb{H}$ in item (1), and the assumption on girth, guarantee that ${\bf A}_\mathbb{H}$ lies in the variety of ${\bf A}$.  
We do not need to repeat the proof here, but need some details in order to demonstrate that it extends to the extra elements~${\bf 0}$ and~${\bf 1}$, and to the extra operations $\uparrow,\cc,!,\exp_2$, and the nullary~$0$.

Let $\hom(\mathbb{H},\mathbbold{2})$ denote the family of all homomorphisms from $\mathbb{H}$ to $\mathbbold{2}$. 
For each vertex $u$ in $\mathbb{H}$ let $\mathbf{b}_u$ denote the tuple in  ${\bf B}^{\hom(\mathbb{H},\mathbbold{2})}$ given by 
\[
\mathbf{b}_u:\phi\mapsto\begin{cases}
2&\text{ if $\phi(u)=1$}\\
a&\text{ if $\phi(u)=a$}.
\end{cases}
\]
For ${\bf i}\in\{{\bf 0},{\bf 1},{\bf 2},{\bf \infty}\}$, let  ${\bf i}$ denote the constant tuple $\phi\mapsto i$ and let ${\bf C}_\mathbb{H}$ denote the $\{+,\cdot,\uparrow,\cc,!,\exp_2\}$-subalgebra generated by $\{\mathbf{b}_u\mid u\in V\}\cup \{{\bf 0},{\bf 1},{\bf 2},\infty\}$ and ${\bf D}_\mathbb{H}$ denote the subalgebra generated by the same elements, except for ${\bf 0}$ and ${\bf 1}$.  
Let $I$ be the set of all elements of the universe of ${\bf C}_\mathbb{H}$ with at least one coordinate equal to~$\infty$.  
The following two properties hold in $\mathbf{C}_\mathbb{H}$ because they hold for the generators and are preserved under applications of operations.
\begin{itemize}
\item[(i)] Only ${\bf 0}$ has a coordinate equal to $0$ and only ${\bf 1}$ has a coordinate equal to $1$.
\item[(ii)] Every element $x\in C_\mathbb{H}\backslash(I\cup\{{\bf 0},{\bf 1}\})$ has all coordinates within $\{2,a\}$, with all but ${\bf 2}$ having at least one coordinate equal to $a$.
\end{itemize}
The elements $0,1$ of ${\bf B}$ have some universally valid properties with respect to other elements: $0^x=0$ and $x^0$ for $x\neq 0$, as well as $1^x=1$, $x^1=x$, $x\cc 0=0\cc x=1$ for all $x$.  
It follows from  item (i) that ${\bf 0}$ and ${\bf 1}$ also have the same respective properties\footnote{We mention here that the choice of $0^0=1$ affects only ${\bf 0}^{\bf 0}={\bf 1}$, and the proof continues to work if the alternative choice of $0^0=0$ is made in the definition of ${\bf B}$ and uniformly throughout the rest of the proof.}.  
All other elements aside from ${\bf 2}$ contain either $a$ or $\infty$ (by item~(ii)).  
Thus, aside from the universally applicable cases involving ${\bf 0}$ and ${\bf 1}$, applications of operations amongst $\{\uparrow,\cc,!,\exp_2\}$ with at least one input other than ${\bf 0}$, ${\bf 1}$ and ${\bf 2}$ always produce an output in $I$.  
The set $I$ is also absorbing for $+,\cdot$, so the equivalence relation $\theta$ collapsing all of $I$ and nothing else, is a congruence.  
This congruence $\theta$ restricts to~${\bf D}_\mathbb{H}$ (we use the same notation $\theta$ for this restriction), and in~\cite{JRZ} it is shown that with respect to the operations $+,\cdot$, we have ${\bf D}_\mathbb{H}/\theta\cong {\bf A}_\mathbb{H}$ under the map sending ${\bf 2}\mapsto {\bf 2}$,  $I\mapsto \infty$ and  $\mathbf{b}_u\mapsto \mathbf{a}_u$.  
This trivially extends to ${\bf 0}$ and ${\bf 1}$ under ${\bf 0}\mapsto {\bf 0}$ and ${\bf 1}\mapsto {\bf 1}$, so that in the reduct signature $\{+,\cdot,0,1\}$ we have~${\bf C}_\mathbb{H}/\theta\cong{\bf B}_\mathbb{H}$ and lies in the variety of ${\bf B}$. 

It remains to show that the behaviour of $\uparrow,\cc,!,\exp_2$ on ${\bf C}_\mathbb{H}/\theta$ agrees with that on ${\bf B}_\mathbb{H}$.  This is trivially verified for applications within ${\bf 0},{\bf 1},{\bf 2}$, while the absorbing property of $I$ with respect to these operations (allowing for the universally valid properties involving ${\bf 0}$ and ${\bf 1}$) agree with the absorbing property of $\infty$ on ${\bf B}_\mathbb{H}$.  As noted in item (ii), the application of the operations $\uparrow,\cc,!,\exp_2$ to inputs involving elements from $C_\mathbb{H}\backslash(I\cup\{{\bf 0},{\bf 1},{\bf 2}\})$ always returns an element of the block $I$.  This is again in agreement with the corresponding behaviour in ${\bf B}_\mathbb{H}$, where the output $\infty$ is always returned from such inputs.  Thus we have an isomorphism between ${\bf C}_\mathbb{H}/\theta$ and ${\bf B}_\mathbb{H}$, showing that ${\bf B}_\mathbb{H}$ is in the variety of ${\bf B}$, as required.
\end{proof}

We may now complete the proof of the main theorem, noting that by Proposition~\ref{pro:in} only the nonfinite basis property remains to be proved.
\begin{proof}[Proof of Theorem~\ref{thm:main}]
This follows the approach of \cite{JRZ} using our adapted and expanded structures.   We prove it in the full signature first.  As observed in the proof of~\cite[Theorem~4.9]{JRZ}, for every $n$, there is a $3$-uniform hypergraph~$\mathbb{H}$ that does not admit a homomorphism into $\mathbbold{2}$ but is such that every $n$-generated subalgebra of~${\bf B}_\mathbb{H}$ is a subalgebra of ${\bf B}_\mathbb{F}$ for some hyperforest $\mathbb{F}$.   More specifically from the proof of \cite[Theorem~4.9]{JRZ}, any hypergraph of chromatic number at least~$3$ and with girth at least~$3\binom{3n}{2}$ will suffice, and such hypergraphs exist by the work of Erd\H{o}s and Hajnal~\cite{erdhaj} for example.  As the operations $\uparrow, \cc,!,\exp_2$ in ${\bf B}_\mathbb{H}$ provide no further generating power beyond that of $+,\cdot$ (consistently returning~$\infty$ in the nontrivial cases), the subalgebra ${\bf B}_\mathbb{F}$ with respect to the operations $\{+,\cdot\}$, as  considered in~\cite{JRZ}, is also a subalgebra with respect to the expanded signature considered here.  So for any given $n$, fix such a hypergraph $\mathbb{H}$.
By Lemma~\ref{lem:hypergraphlaw} we have that ${\bf B}_\mathbb{H}$ is not in the variety of~${\bf B}$, because it fails the law $\tau_\mathbb{H}$, which holds on ${\bf B}$. 
By Lemma~\ref{lem:hypergraphproperties}(1,2) we have that every $n$-generated subalgebra of ${\bf B}_\mathbb{H}$ lies in the variety of ${\bf B}$.  Thus~${\bf B}$ has no finite basis for its equational theory, as such a basis would involve some finite number $n$ of variables, and all assignments of $n$ variables into ${\bf B}_\mathbb{H}$ lie within an $n$-generated subalgebra of ${\bf B}_\mathbb{H}$.  For weaker choices of $\tau$, observe that if the full signature version of ${\bf B}_\mathbb{F}$ lies in the variety ${\bf B}$, then the reduct to signature $\tau$ lies in the variety generated by~${\bf B}_\tau$.  Dually, we showed that ${\bf B}_\mathbb{H}$ is not in the variety of ${\bf B}$ by virtue of Lemma~\ref{lem:hypergraphlaw}, which used only operations from within~$\tau$. 
\end{proof}
This proof does not show that $\langle\mathbb{N}_0,+,\cdot,\cc,!,\exp_2,0,1\rangle$ has no finite basis for its equational theory, as we only showed that ${\bf B}_\mathbb{H}$ did not lie in the variety of ${\bf B}$, and it is possible that ${\bf B}_\mathbb{H}$ still lies within the variety of $\langle\mathbb{N}_0,+,\cdot,\cc,!,\exp_2,0,1\rangle$.  A consequence of Lemma~\ref{lem:hypergraphproperties}(1,2) is that ${\bf B}_\mathbb{H}$ lies in the variety of $\langle\mathbb{N}_0,+,\cdot,\cc,!,\exp_2,0,1\rangle$ whenever $\mathbb{H}$ is $\leq 2$-robustly 1-in-3 satisfiable.

We have noted that in the case of signatures where $0$ is not required, then the proof can be carried out using the subalgebra on $\{1,2,a,\infty\}$.  In subsignatures of the combinatorial operations $\{+,\cdot,\cc,\exp_2,!,0,1\}$ (avoiding full exponentiation) the reader will observe from the tables that we may identify $1\equiv 2$ to produce a quotient algebra on $\{0,1,a,\infty\}$.  
A version of the main theorem then works by trivial amendment of the proofs: identifying $1$ with $2$ everywhere.  In the case of the signature $\{+,\cdot,0\}$, this 4-element algebra is the additively idempotent semiring $S_7^0$ considered in~\cite{WRZ} (the $\{+,\cdot\}$ and $\{+,\cdot,0\}$ case of the following corollary), and our theorem yields a new proof of the main result of there.
\begin{cor}
The algebraic structure $S_7^0$ has no finite basis for its identities in any of the signature $\tau$ with $\{+,\cdot\}\subseteq\tau\subseteq\{+,\cdot,\cc,!,\exp_2,0,1\}$.
\end{cor}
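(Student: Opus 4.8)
The plan is to rerun the proof of Theorem~\ref{thm:main} with the elements $1$ and $2$ identified throughout; this identification is compatible with every operation in $\{+,\cdot,\cc,!,\exp_2,0,1\}$ but not with $\uparrow$ (for instance $a\uparrow 1=a$ while $a\uparrow 2=\infty$), which is precisely why $\uparrow$ is excluded from the hypothesis on $\tau$. Concretely, I would first check, by direct inspection of the five tables for $+,\cdot,\cc,!,\exp_2$, that the partition of $\{0,1,2,a,\infty\}$ whose only non-singleton block is $\{1,2\}$ is a congruence $\theta$ of ${\bf B}_\tau$ for every $\tau\subseteq\{+,\cdot,\cc,!,\exp_2,0,1\}$ (the rows and columns indexed by $1$ and $2$ either coincide or have their entries in a single $\theta$-block). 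Write $S_7^0:={\bf B}_\tau/\theta$; it is carried by the four-element set $\{0,[1],a,\infty\}$, is a quotient of ${\bf B}_\tau$ and so (by Proposition~\ref{pro:in}) lies in the variety of $\langle\mathbb{N}_0,\tau\rangle$, and for $\tau=\{+,\cdot,0\}$ it is the additively idempotent semiring of~\cite{WRZ}, as already remarked. The same partition of the universe of ${\bf B}_\mathbb{H}$, with sole non-singleton block $\{{\bf 1},{\bf 2}\}$, is in the same way a congruence $\theta_\mathbb{H}$ of ${\bf B}_\mathbb{H}$.

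Next I would re-prove Lemmas~\ref{lem:hypergraphlaw} and~\ref{lem:hypergraphproperties} with $1$ and $2$ identified, i.e.\ with ${\bf B},{\bf B}_\mathbb{H},{\bf C}_\mathbb{H}$ replaced by $S_7^0$, ${\bf B}_\mathbb{H}/\theta_\mathbb{H}$, and the image of ${\bf C}_\mathbb{H}$ under the coordinatewise surjection ${\bf B}^{\hom(\mathbb{H},\mathbbold{2})}\onto(S_7^0)^{\hom(\mathbb{H},\mathbbold{2})}$, and with ``assigned a value in $\{1,2\}$'' read as ``assigned $[1]$''. The template $\mathbbold{2}$ never uses the value $2$, so it is unchanged, and the 1-in-3 case analysis in Lemma~\ref{lem:hypergraphlaw} survives verbatim, giving $S_7^0\models\tau_\mathbb{H}$ if and only if there is no homomorphism $\mathbb{H}\to\mathbbold{2}$. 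In the proof of Lemma~\ref{lem:hypergraphproperties}(1) the structural facts (i) and (ii) never distinguish $1$ from $2$ beyond recording which coordinates are at least $2$, so they persist under the identification, the absorbing set $I$ and the congruence collapsing it are unaffected, and the final isomorphism there between ${\bf C}_\mathbb{H}$ (modulo the congruence collapsing $I$) and ${\bf B}_\mathbb{H}$ descends to an isomorphism of the $1\equiv 2$ collapsed algebras, now invoking the $\{+,\cdot,0\}$-semiring computation of~\cite{WRZ} in place of that of~\cite{JRZ} and the unchanged fact that $\uparrow,\cc,!,\exp_2$ return only $\infty$ in all nontrivial cases. Hence ${\bf B}_\mathbb{H}/\theta_\mathbb{H}$ lies in the variety of $S_7^0$ whenever $\mathbb{H}$ is $\leq 2$-robustly 1-in-3 satisfiable, in particular whenever $\mathbb{H}$ is a hyperforest.

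The argument then closes exactly as the proof of Theorem~\ref{thm:main}: for a given $n$, fix a $3$-uniform hypergraph $\mathbb{H}$ of chromatic number at least $3$ and girth at least $3\binom{3n}{2}$; then ${\bf B}_\mathbb{H}/\theta_\mathbb{H}$ fails $\tau_\mathbb{H}$, which holds on $S_7^0$, so it is not in the variety of $S_7^0$, whereas any $n$-generated subalgebra of ${\bf B}_\mathbb{H}/\theta_\mathbb{H}$ lifts to an $n$-generated subalgebra of ${\bf B}_\mathbb{H}$, which embeds into ${\bf B}_\mathbb{F}$ for a hyperforest $\mathbb{F}$, and hence the original embeds into ${\bf B}_\mathbb{F}/\theta_\mathbb{F}$ and so lies in the variety of $S_7^0$. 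Thus no identity basis on $n$ variables axiomatises $S_7^0$; as $n$ is arbitrary, $S_7^0$ has no finite basis for its identities. The cases of smaller $\tau$ follow by passing to reducts just as at the end of the proof of Theorem~\ref{thm:main}, and the case $\tau=\{+,\cdot,0\}$ (or $\{+,\cdot\}$) recovers the main theorem of~\cite{WRZ}.

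The one point that is more than bookkeeping is the adapted Lemma~\ref{lem:hypergraphproperties}(1): one must confirm that building the witnessing subalgebra directly over $(S_7^0)^{\hom(\mathbb{H},\mathbbold{2})}$ and then collapsing the block $I$ produces the same algebra, namely ${\bf B}_\mathbb{H}/\theta_\mathbb{H}$, as collapsing $1\equiv 2$ coordinatewise in ${\bf C}_\mathbb{H}$ and then collapsing $I$ — in other words, that the coordinatewise $\theta$-quotient commutes with the relevant subalgebra formation and with the $I$-collapse. This is exactly what facts (i) and (ii) of that proof deliver, so the verification remains routine.
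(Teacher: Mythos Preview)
Your proposal is correct and matches the paper's own approach: the paper simply observes that identifying $1\equiv 2$ is a congruence on ${\bf B}_\tau$ for every $\tau\subseteq\{+,\cdot,\cc,!,\exp_2,0,1\}$ (but not for $\uparrow$), and states that ``a version of the main theorem then works by trivial amendment of the proofs: identifying $1$ with $2$ everywhere''; you have spelled out precisely this amendment. One small remark: in your adapted Lemma~\ref{lem:hypergraphproperties}(1) there is no need to switch from the computation of~\cite{JRZ} to that of~\cite{WRZ}, since the subalgebra ${\bf A}$ on $\{2,a,\infty\}$ is unchanged by the identification (only the ambient element $1$ is collapsed into it), so the JRZ input still applies directly.
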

\section*{Appendix A: extension to reals and well-ordering}
Because there has been comparatively less exploration of problems relating to combinatorial algebra, we include some observations that bring it better into line with the known situation for Tarski's High School Algebra Problem.
Let $\mathbb{R}_0^+$ denote $\{0\}\cup\mathbb{R}^+$, the non-negative reals.  
We extend $\cc$ and $!$ to $\mathbb{R}_0^+$ as noted earlier: $x\cc y := 1/B(x+1,y+1)$ and $x!:=\Gamma(x+1)$.  
The following is essentially the argument outlined for $\{+,\cdot,\uparrow,1\}$ in Burris and Yeats~\cite{buryea} (Corollary~2.2 and Section~4.1).
\begin{lem}\label{lem:Hardy}
Let $s,t$ be $\{+,\cdot,\uparrow,\cc,!,\exp_2,0,1\}$-terms.  If $s$ and $t$ are terms avoiding $\uparrow$ then they either coincide on $\mathbb{R}_0^+$ or are equal at only finitely many points.  If~$s$ and $t$ involve $\uparrow$ then they either coincide on $\mathbb{R}^+$ or are equal at only finitely many points.
\end{lem}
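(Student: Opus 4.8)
\emph{Overview and analytic set‑up.}
The plan is to realise $s$ and $t$ as real‑analytic functions of one variable and to locate their germs at $+\infty$ inside a single Hardy field; the standard dichotomy that a nonzero germ in a Hardy field is eventually of constant nonzero sign then forces $s-t$ to be either the zero germ (whence $s\equiv t$, by the identity theorem) or eventually non‑vanishing, and a compactness argument disposes of the bounded part of the domain. First I would set up the analytic framework: regarding $s,t$ as functions of the single variable $x$ occurring in them, on $\mathbb{R}^+$, a routine structural induction shows that every $\{+,\cdot,\uparrow,\cc,!,\exp_2,0,1\}$‑term defines a real‑analytic function on $\mathbb{R}^+$. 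Indeed constants and the variable are analytic, $+$ and $\cdot$ preserve analyticity, and $\Gamma(g+1)$, $2^{g}$, and $1/B(g+1,h+1)=\Gamma(g+h+2)/(\Gamma(g+1)\Gamma(h+1))$ are analytic wherever their nonnegative arguments lie, since $\Gamma$ is analytic and zero‑free on $(0,\infty)$; for exponentiation one first observes, again by induction, that every subterm is either identically $0$ or strictly positive on $\mathbb{R}^+$, so in any nontrivial occurrence $b^{e}$ the base $b$ is positive analytic and $b^{e}=\exp(e\log b)$ is analytic. Thus $Z:=\{x\in\mathbb{R}^+\mid s(x)=t(x)\}$ is exactly the zero set on $\mathbb{R}^+$ of the real‑analytic function $s-t$ (subtraction being available at the level of functions, though not in the signature). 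When neither $s$ nor $t$ involves $\uparrow$ these same checks show that $s,t$ even extend analytically across $x=0$ (all arguments of $\Gamma$ stay $\geq 1$), so in particular are continuous there.

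\emph{Hardy‑field placement --- the main obstacle.}
The heart of the argument, and the step I expect to be hardest, is to show that the germ at $+\infty$ of every term‑function lies in a common Hardy field $\mathcal{H}$. One starts from Hardy's field $L$ of logarithmico‑exponential germs, which is closed under $+,-,\cdot,/$, composition, $\exp$, $\log$ and differentiation and already contains the germs of polynomials, of $\exp_2$, and of every finite tower of exponentials --- hence of all $\uparrow$‑terms, which is exactly the situation handled in Burris and Yeats~\cite{buryea}. To accommodate $!$ and $\cc$ it suffices to bring in the germ of $\log\Gamma(x+1)$: its derivative, the digamma function, has the classical asymptotic expansion $\log x+\tfrac{1}{2x}-\cdots$, and the extension theory of Hardy fields (in the style of Rosenlicht and Boshernitzan) places $\log\Gamma(x+1)$, hence $x!=\Gamma(x+1)$ and the one‑variable specialisations of $x\cc y$, together with $L$ inside a Hardy field $\mathcal{H}$. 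A final structural induction, using closure of $\mathcal{H}$ under $+,-,\cdot,/,\exp,\log$, then puts the germ of every term‑function, and in particular of $s-t$, inside $\mathcal{H}$. Making this last step watertight for compositions such as $\Gamma(g+1)$ with $g$ an arbitrary term‑germ is where the care lies; everything else in the proof is bookkeeping.

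\emph{Conclusion.}
Finally I would combine the two ingredients. If $s-t$ is the zero germ then $s=t$ on some interval $(N,\infty)$, so $s\equiv t$ on $\mathbb{R}^+$ by real analyticity, and when $\uparrow$ is absent continuity at $0$ upgrades this to agreement on all of $\mathbb{R}_0^+$. Otherwise $s-t$ is eventually nonzero, say $s(x)\neq t(x)$ for $x>N$, so $Z\subseteq(0,N]$; being a not‑identically‑zero real‑analytic function, $s-t$ has only isolated zeros, hence finitely many on each $[\delta,N]$, and they do not accumulate at $0^+$ either --- when $\uparrow$ is absent because $s-t$ is analytic across $0$, and when $\uparrow$ is present by rerunning the Hardy‑field dichotomy for the germ at $0^+$, equivalently the germ at $+\infty$ of $(s-t)(e^{-u})$, which again lies in $\mathcal{H}$. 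So $Z$ is finite, and in the $\uparrow$‑free case adjoining the point $0$ leaves it finite. This reproduces, in the richer signature, the argument sketched for $\{+,\cdot,\uparrow,1\}$ in~\cite{buryea}, the only genuinely new input being the Hardy‑field placement of the gamma and beta functions.
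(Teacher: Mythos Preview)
Your overall strategy is coherent but takes a genuinely different route from the paper, and the step you yourself flag as ``where the care lies'' is in fact a real gap that the paper's method sidesteps entirely.

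The paper does not use Hardy fields. Instead, after the same analyticity/Identity Theorem opening, it invokes the o-minimality result of van den Dries and Speissegger~\cite{vddspe}: there is an o-minimal expansion of $\langle\mathbb{R},+,-,\cdot,\leq,\exp,0,1\rangle$ in which the restriction of $\Gamma$ to $(0,\infty)$ (hence also the beta function) is definable. Every one-variable term in the signature then defines a function in this structure, so the solution set $\{r\in\mathbb{R}^+\mid s(r)=t(r)\}$ is definable and therefore a finite union of points and intervals; combined with the Identity Theorem, if $s$ and $t$ are not identical then no interval occurs and the set is finite. The multivariable case is then dispatched by a short induction on the number of variables.

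The crucial advantage of the o-minimal route is that definability is trivially closed under composition: if $g$ is a definable term-function, then $\Gamma(g+1)$ is definable, full stop. Your Hardy-field approach needs the analogous closure---that the putative field $\mathcal{H}$ contains $\Gamma(g(x)+1)$ for every term-function $g$---and this is \emph{not} a standard closure property of Hardy fields. Rosenlicht--Boshernitzan type extension theorems let you adjoin the single germ $\log\Gamma(x+1)$ to $L$, but they do not by themselves give you $\log\Gamma(g+1)$ for arbitrary $g\in\mathcal{H}$; Hardy fields are not in general closed under precomposition, and iterating the extension theorems across all subterms would require controlling an unbounded tower of adjunctions. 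In practice the cleanest way to obtain such a composition-closed Hardy field containing $\Gamma$ is precisely via the o-minimality theorem you are trying to avoid (the germs at $+\infty$ of unary definable functions in an o-minimal expansion of the real field always form a Hardy field). The same issue bites your endpoint argument: the claim that the germ at $+\infty$ of $(s-t)(e^{-u})$ ``again lies in $\mathcal{H}$'' is another precomposition and is not automatic.

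So: your outline is a reasonable heuristic, but the composition step is not just bookkeeping---it is the substantive content, and the paper's proof replaces it with a single citation to~\cite{vddspe}. If you want to salvage the Hardy-field presentation, the honest fix is to quote the o-minimality result and then observe that the definable germs form a Hardy field; but at that point the Hardy-field language is an unnecessary detour.
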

\begin{proof}
All the operations except $\uparrow$ are analytic on $\mathbb{R}_0^+$, while $\uparrow$ is analytic on $\mathbb{R}^+$.  By the Identity Theorem of real analysis, two term functions agreeing on an interval must agree on $\mathbb{R}_0^+$ everywhere (or  $\mathbb{R}^+$ if $\uparrow$ is involved).  
Thus we need to show that any two term functions not agreeing on any interval must agree at only finitely many points.  In the one-variable case, this follows immediately from the work of van den Dries and Speisseger~\cite{vddspe}, who show that there is a o-minimal extension of $\langle\mathbb{R},+,-,\cdot,\leq,\exp, 0,1\rangle$ in which the gamma function (and hence the beta function) can be defined.  
So for one variable term functions $s(x),t(x)$ in the signature $\{+,\cdot,\uparrow,\cc,!,\exp_2,0,1\}$-terms, and not agreeing on an interval, the solution set $\{r\in \mathbb{R}^+\mid s(r)=t(r)\}$ is an o-minimal set, hence is finite.  
The case for general $s,t$ follows by induction on the number of variables.
\end{proof}
\begin{thm}
The equational theory of $\langle \mathbb{R}_0^+,+,\cdot,\uparrow,\cc,!,\exp_2,0,1\rangle$ coincides with the equational theory of $\langle \mathbb{N}_0,+,\cdot,\uparrow,\cc,!,\exp_2,0,1\rangle$.
\end{thm}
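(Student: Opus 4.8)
The plan is to prove the two inclusions of equational theories separately. One inclusion is immediate: $\langle\mathbb{N}_0,+,\cdot,\uparrow,\cc,!,\exp_2,0,1\rangle$ is a subalgebra of $\langle\mathbb{R}_0^+,+,\cdot,\uparrow,\cc,!,\exp_2,0,1\rangle$. Indeed $+$, $\cdot$, $\exp_2$ and the constants $0,1$ visibly preserve $\mathbb{N}_0$ and take the same values there; $\uparrow$ does too once $0^0:=1$ is fixed on both sides; and the analytic continuations $x!:=\Gamma(x+1)$ and $x\cc y:=1/B(x+1,y+1)$ were chosen precisely so as to agree with the factorial and with $(x,y)\mapsto\binom{x+y}{y}$ at non-negative integer arguments, where they return natural numbers. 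Since the equational theory of a subalgebra contains that of the ambient algebra, every law of $\mathbb{R}_0^+$ holds on $\mathbb{N}_0$.

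For the reverse inclusion, let $s\approx t$ be an equation in variables $x_1,\dots,x_k$ valid on $\mathbb{N}_0$; I would prove it valid on $\mathbb{R}_0^+$ by induction on $k$. The case $k=0$ is trivial: $s$ and $t$ are built from $0,1$ and the operations, so they evaluate (in either structure) to a common element of $\mathbb{N}_0\subseteq\mathbb{R}_0^+$. Assume $k\geq 1$ and fix $\vec r\in(\mathbb{R}_0^+)^k$; I must show $s(\vec r)=t(\vec r)$. If every coordinate of $\vec r$ is positive, apply Lemma~\ref{lem:Hardy}: when $s,t$ avoid $\uparrow$, they either coincide on $(\mathbb{R}_0^+)^k$ or are equal at only finitely many points, and since they agree on the infinite set $\mathbb{N}_0^{\,k}$ (as $\mathbb{N}_0\models s\approx t$) the former holds; when $s,t$ involve $\uparrow$, the same reasoning with the $\mathbb{R}^+$-clause of the lemma and the infinite set $\{1,2,\dots\}^k$ gives coincidence on $(\mathbb{R}^+)^k$. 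Either way $s(\vec r)=t(\vec r)$.

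It remains to treat points $\vec r$ having some zero coordinates, say exactly those indexed by a nonempty $I\subseteq\{1,\dots,k\}$. Here I would pass to the terms $s^{\ast},t^{\ast}$ obtained by substituting the constant $0$ for each $x_i$ with $i\in I$; these involve only the $k-|I|<k$ variables $\{x_j:j\notin I\}$, and by compositionality each instance of $s^{\ast}\approx t^{\ast}$ at non-negative integers is an instance of $s\approx t$ at non-negative integers, so $\mathbb{N}_0\models s^{\ast}\approx t^{\ast}$. By the induction hypothesis $\mathbb{R}_0^+\models s^{\ast}\approx t^{\ast}$; evaluating at $(r_j)_{j\notin I}$ yields $s(\vec r)=t(\vec r)$, completing the induction.

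I expect the only genuinely delicate point to be this last step in the presence of $\uparrow$: term functions built with $\uparrow$ are discontinuous on the boundary of the positive orthant (already $x\uparrow y$ near $(0,0)$), so the Identity-Theorem argument behind Lemma~\ref{lem:Hardy}, which only governs $(\mathbb{R}^+)^k$, cannot by itself reach those boundary points — the substitution-and-induction device is exactly what closes the gap. A secondary matter worth checking carefully is that the chosen continuations of $!$ and $\cc$ really do return the intended combinatorial values at non-negative integers, so that $\mathbb{N}_0$ is honestly a subalgebra and $\mathbb{N}_0^{\,k}$ honestly lies in the agreement locus.
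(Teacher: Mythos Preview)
Your proof is correct and follows the same approach as the paper, which simply invokes Lemma~\ref{lem:Hardy} directly. Your explicit substitution-and-induction device for points with zero coordinates is a welcome refinement, since in the presence of $\uparrow$ the lemma only guarantees agreement on $(\mathbb{R}^+)^k$, a boundary issue the paper's one-line proof does not spell out.
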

\begin{proof}
This follows immediately from Lemma~\ref{lem:Hardy} because terms agreeing on $\mathbb{N}_0$ agree at infinitely many points, and therefore agree on all of $\mathbb{R}^+_0$.
\end{proof}
\begin{question}
Is the equational theory decidable of $\mathbb{N}_0$ or $\mathbb{R}_0^+$ decidable, for the signature $\{+,\cdot,\cc,!,\exp_2,0,1\}$ and nontrivial subsignatures, or with $\uparrow$ included?
\end{question}

For one-variable terms $s(x)$ and $t(x)$, in signature $+,\cdot,\uparrow,\cc,!,1$ (omitting $0$), define the eventual dominance relation by $s(x)\preceq t(x)$ if there exists $x_0$ such that $s(x)\leq t(x)$ whenever $x>x_0$, and write $s(x)\prec t(x)$ if the inequality $\leq$ can be replaced by $<$. 
\begin{lem}\label{lem:linearorder}
For any two  terms $s(x)$, $t(x)$ either $\langle \mathbb{N},+,\cdot,\uparrow,\cc,!,1\rangle\models s\approx t$ or $s\prec t$ or $t\prec s$.
\end{lem}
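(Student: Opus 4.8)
The plan is to derive the trichotomy from Lemma~\ref{lem:Hardy} by an elementary continuity argument. Since $s$ and $t$ contain no occurrence of $0$, the one-variable term functions they define make sense on all of $\mathbb{R}^+$ (and on $\mathbb{R}_0^+$ if neither term involves $\uparrow$), and I will in fact establish the dichotomy $s\prec t$ / $t\prec s$ with $x$ ranging over the reals, which a fortiori gives it for $x\in\mathbb{N}$.

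The first step is a case split. If $\langle\mathbb{N},+,\cdot,\uparrow,\cc,!,1\rangle\models s\approx t$, then the first alternative of the lemma holds and there is nothing further to prove. Otherwise $s$ and $t$ disagree at some natural number, so they do not coincide on $\mathbb{R}^+$ (resp.\ $\mathbb{R}_0^+$), since $\mathbb{N}$ is contained in that domain; hence Lemma~\ref{lem:Hardy} applies in its ``finitely many points'' case, and the set $Z=\{r : s(r)=t(r)\}$ is finite. I then choose $x_0$ strictly larger than every element of $Z$.

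The second step is to pin down the sign of $f:=s-t$ on the ray $(x_0,\infty)$. As recorded in the proof of Lemma~\ref{lem:Hardy}, each of the operations $+,\cdot,\uparrow,\cc,!$ is analytic on $(x_0,\infty)$, so $f$ is analytic, in particular continuous, there, and by the choice of $x_0$ it has no zero on $(x_0,\infty)$. A continuous real function that is nowhere zero on an interval has constant sign there, by the Intermediate Value Theorem; hence either $f(x)<0$ for all $x>x_0$, which is exactly $s\prec t$, or $f(x)>0$ for all $x>x_0$, which is exactly $t\prec s$. This completes the argument.

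Essentially all the substance is carried by Lemma~\ref{lem:Hardy} (and, beneath it, the o-minimality input of van den Dries and Speissegger), so I do not anticipate a genuine obstacle. The one point that calls for a moment's care — and the only step that is not purely formal — is the passage from ``finitely many agreements'' to ``eventually constant sign'': one must observe that $s-t$ cannot change sign on the ray $(x_0,\infty)$ without vanishing somewhere on it, which the finiteness of $Z$ forbids; this is where continuity of $f$ is used.
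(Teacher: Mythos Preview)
Your proof is correct and follows essentially the same route as the paper: both invoke Lemma~\ref{lem:Hardy} to reduce to the case where $s$ and $t$ agree at only finitely many real points, and then conclude $s\prec t$ or $t\prec s$. The paper leaves the final step implicit, while you spell out the continuity/IVT argument that converts ``finitely many agreements'' into ``eventually constant sign''; this is a welcome clarification but not a different idea.
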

\begin{proof}
This follows immediately from Lemma~\ref{lem:Hardy}, as if a law $s\approx t$ fails on~$\mathbb{N}$, then the lemma shows that $s$ and $t$ agree at only finitely many places on $\mathbb{R}^+$, so either $s\prec t$ or $t\prec s$.
\end{proof}
The one-variable term functions in signature $\{+,\cdot,\uparrow,1\}$ are often referred to as \emph{Skolem exponential terms}, denoted $\Sk$, and have seen significant attention, starting with Skolem's original efforts in~\cite{sko}; see Berarducci and Mamino~\cite{bermam} for a comprehensive overview and the most recent developments.  
Richardson~\cite{ric} showed that $\preceq$ is a linear order on~$\Sk$, while Ehrenfeucht~\cite{ehr} showed that it is a well-ordering.  Skolem earlier showed that a subclass is well-ordered and has order type $\epsilon_0=\sup\{\omega,\omega^\omega,\omega^{\omega^\omega},\dots\}$, and speculated that the same might be true for the full class.    
Skolem's problem remains open, as does the decidability of $\prec$.  For any subset $\tau$ of $\{\uparrow,\cc,!,\exp_2\}$, let $\Co_\tau$ denote the one variable $\{+,\cdot,1\}\cup\tau$-functions on $\mathbb{N}$, noting that $\Sk=\Co_{\{\uparrow\}}$.  The following observation extends the results of Richardson and Ehrenfeucht.
\begin{thm}
The relation $\preceq$ is a well-ordering of $\Co_{\{\uparrow,\cc,!,\exp_2\}}$.
\end{thm}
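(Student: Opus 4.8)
The plan is to bootstrap from Ehrenfeucht's theorem that $(\Sk,\preceq)$ is a well-ordering. Since $\exp_2(u)=(1+1)\uparrow u$ as functions, $\exp_2$ is already term-definable and $\Co_{\{\uparrow,\cc,!,\exp_2\}}=\Co_{\{\uparrow,\cc,!\}}$; write $\Co$ for this class. By Lemma~\ref{lem:linearorder} the relation $\preceq$ is already a total order on $\Co$, so only well-foundedness needs to be proved. Throughout I would work with germs at $+\infty$, which is legitimate by Lemma~\ref{lem:Hardy}, and I record the elementary facts that every member of $\Co$ takes integer values on $\mathbb{N}$ and has germ $\succeq 1$.

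First I would prove a squeezing lemma: for every $\Co$-term $t$ there are Skolem terms $p_t\preceq t\preceq q_t$. The base cases and the cases of $+,\cdot,\uparrow$ are immediate, since $\Sk$ contains $1$ and $x$ and is closed under $+,\cdot,\uparrow$ (using monotonicity of $u\uparrow v$ in each argument on germs $\succeq 1$). For $!$ one uses $n!\le n^n$ and $n!\ge 2^n$ (for $n\ge 4$); for $\cc$ one uses $u+v\le\binom{u+v}{v}\le 2^{u+v}$, so that $u\cc v$ is trapped between the Skolem terms $u+v$ and $(1+1)\uparrow(u+v)$. Inspecting which operations can create super-polynomial growth also yields the dichotomy that every member of $\Co$ is either polynomially bounded --- in which case it is the germ of an honest polynomial in $\mathbb{N}[x]$ --- or dominates $2\uparrow x$; this will be used to control the fibres below.

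Next, using the squeeze together with Ehrenfeucht's theorem, I would define $\lceil t\rceil$ to be the $\preceq$-least Skolem term with $t\preceq\lceil t\rceil$. The resulting map $\lceil\,\cdot\,\rceil\colon\Co\to\Sk$ is monotone and order-reflecting across fibres: if $\lceil t\rceil\prec\lceil t'\rceil$ then $\lceil t\rceil$ is not $\succeq t'$, so $t\preceq\lceil t\rceil\prec t'$. Hence $(\Co,\preceq)$ is the ordered sum $\sum_{s\in\Sk}F_s$ of the fibres $F_s=\lceil\,\cdot\,\rceil^{-1}(s)$ over the well-order $\Sk$, and since an ordered sum of well-orders indexed by a well-order is a well-order, it is enough to show each $F_s$ is well-ordered. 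Because the germs in play are integer-valued, nothing lies strictly between $\sigma$ and $\sigma+1$, so the successor of any $\sigma\in\Sk$ is $\sigma+1$; consequently each fibre over a successor element (or over the least element) of $\Sk$ is a singleton.

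The remaining and main task is to show $F_s$ is well-ordered when $s$ is a limit of $\Sk$, and here the plan is a transfinite induction along $\Sk$: assuming that $\{t\in\Co:\lceil t\rceil\prec s\}$ is well-ordered (by the inductive hypothesis, this being the union of the fibres $F_{s'}$ with $s'\prec s$), one analyses $F_s$ by identifying the finitely many ``new'' building blocks that first appear in the band below $s$ --- for instance $x!$ at the level $s=x\uparrow x$, where $\log(x!)\sim\log(x\uparrow x)$ by Stirling but $(x!)^2\succ x\uparrow x$ --- and putting each member of $F_s$ into a normal form $r\cdot(\text{block})+(\text{remainder})$ with $r$ ranging over an already-well-ordered coefficient set and the remainder lying in $\{t\in\Co:\lceil t\rceil\prec s\}$; a germ comparison then identifies $\preceq$ on $F_s$ with the associated lexicographic order, which is well-ordered (with the maximum $s$ adjoined). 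I expect this last step to be where essentially all the work lies: it is a band-by-band reenactment of the asymptotic analysis underlying Ehrenfeucht's theorem for $\Sk$ (see also Berarducci and Mamino~\cite{bermam}), now with the extra burden of carrying Stirling's estimate and the binomial estimates through the normal-form bookkeeping and checking that $!$ and $\cc$ never escape the level hierarchy these normal forms organise.
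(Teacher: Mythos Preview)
Your proposal has a genuine gap: the entire argument rests on the claim that each limit fibre $F_s$ is well-ordered, and you do not prove this---you only describe a hoped-for normal-form analysis and explicitly concede that ``essentially all the work lies'' there. The sketch you give (finitely many ``new building blocks'' at each level, a decomposition $r\cdot(\text{block})+(\text{remainder})$, lexicographic comparison) is not justified and is far from routine: already at the level $s=x^x$ the fibre contains $x!$, $(x+1)!$, $(x+2)!,\dots$, all products $p(x)\cdot x!$ for polynomials $p$, various $\cc$-expressions, and so on, and you give no argument that these admit the kind of finite normal-form calculus you need. More importantly, you have misidentified what ``the asymptotic analysis underlying Ehrenfeucht's theorem'' actually is. Ehrenfeucht's proof does \emph{not} proceed by a band-by-band asymptotic normal-form analysis; it uses Kruskal's Tree Theorem. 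So the analogy you are relying on to make the missing step plausible does not exist.

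The paper's proof follows Ehrenfeucht's actual strategy and avoids all of this. One restricts to \emph{reduced} terms (forbidding subterms $1^t$, $1\cdot t$, $t\cdot 1$, $1!$) so that every operation is pointwise nondecreasing in each argument on the relevant domain, and then shows that a Kruskal tree embedding $s\rightarrowtail t$ of term trees implies $s\preceq t$ as functions, by repeatedly collapsing ``subtree defects'' of the embedding using the monotonicity just arranged. Kruskal's Tree Theorem then gives that the term trees are well-partially-ordered under $\rightarrowtail$, and since $\preceq$ is a linear extension of a well-partial-order it is a well-order. This argument is short, uniform across all the operations $+,\cdot,\uparrow,\cc,!,\exp_2$, and requires no asymptotic estimates beyond the trivial monotonicity checks; your Skolem-fibration approach, even if it could be completed, would be substantially longer and would need Stirling-type asymptotics at every level.
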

\begin{proof}
Ehrenfeucht's proof in~\cite{ehr} is already quite abbreviated and contains an (easily correctable) error relating to $1$, so we give a full argument for our expanded signature, even if the proof strategy is essentially that outlined  in~\cite{ehr}.  
For succinctness of notation, we blur the distinction between terms, term trees and term functions, leaving the context to disambiguate.
The linear order property for $\preceq$ is Lemma~\ref{lem:linearorder}.  
For well-ordering, the proof strategy is to consider the term trees of the functions in $\Co_{\{\uparrow,\cc,!,\exp_2\}}$, and argue that a tree embedding (in the sense of Kruskal's Tree Theorem~\cite{kruskal}) between the term trees of $s$ and $t$, implies that $s\preceq t$ as functions.  
By Kruskal's Tree Theorem, the tree embedding order is a well partial order, and as $\preceq$ is a linear order extending this order, it is a well-ordering: this step is precisely the ``Lemma'' in~\cite{ehr}, so we focus on the argument showing $s\inj t$ implies $s\preceq t$, where $s\inj t$ denotes the existence of a tree embedding.

We first restrict the family of terms in the signature $\{+,\cdot,\uparrow,\cc,!,\exp_2\}$ to those that are \emph{reduced}\footnote{In \cite{ehr}, only base $1$ exponentiation $1^s$ in terms is excluded from the abstract terms, so that $(1\cdot 1)^x$ is allowed.  But then one obtains $x\inj (1\cdot 1)^x$, conflicting with $x\not\preceq (1\cdot 1)^x$.} in the sense that they avoid subterms of the form $1^t$, $1\cdot t$, $t\cdot 1$, and~$1!$.  
We can do this because every term function in $\Co_{\{\uparrow,\cc,!,\exp_2\}}$ is identical as a function to the term function of a reduced term.
The directed edges of a term tree point away from the root, and when $(u,v)$ is a directed edge, then $v$ is a \emph{successor} of  $u$, and $u$ is the (unique) \emph{predecessor} of~$v$.   
All internal nodes of a term tree are labelled by non-constant operation symbols $\{+,\cdot,\uparrow,\cc,!,\exp_2\}$ and all leaves labelled by either $x$ or $1$.  
This alphabet is given the antichain order, so that a \emph{tree embedding} of $s$ into~$t$ is a function~$F$ from the nodes of $s$ to the nodes of $t$ that preserves labels and such that the successors 
$v_1,\dots,v_n$ of each node $v$ are mapped to nodes of the term tree of~$t$ that are each reachable by a proper directed path from $F(v)$ via \emph{distinct} successors of~$F(v)$.  Equivalently, the unique oriented path between $F(v_i)$ and $F(v_j)$ in $t$ contains~$F(v)$.  
If $F(v_i)$ is not a successor of $F(v)$ we will refer to the pair $(v,v_i)$ as a \emph{subtree defect} for $F$, as a tree embedding without subtree defects is simply a mapping identifying an instance of $s$ as a subterm of $t$.  We write $s\leq t$ to mean $s$ is a subterm of $t$, and if $v$ is a node in the term tree of $t$ we write $t_v$ for the subtree (or subterm) rooted at $v$.

Each of the binary operations in $\{+,\cdot,\uparrow\}$ satisfy $s,t\preceq s\bbox t$ except when $s\approx 1$ is a valid law and $\bbox$ is $\uparrow$; but this exceptional case cannot occur in reduced terms (if $s\approx 1$ holds then either $s=1$ so that $s^t$ is forbidden, or $s$ itself contains a forbidden subterm).  
Similarly, for the unary operations $\bbox\in\{!,\exp_2\}$ we have $s\preceq s!$ and $s\preceq \exp_2(s)$.
From this, the following two observations hold for terms $s$, $t$ and $s'$, using induction on the level at which the term tree of $s$ is rooted in the term tree in $t$.
\begin{enumerate}
\item[(1)] If $s\leq t$ then $s\preceq t$.
\item[(2)] If $s\leq t$ and $s'\preceq s$, then replacing an instance of the subterm $s$ in $t$ by $s'$ yields a term $t'$ with $t'\preceq t$.
\end{enumerate} 

Now we may show that $s\inj t$ (by some  tree embedding $F$) implies $s\preceq t$.  
If $s\leq t$ then this precisely observation (1).
So we may assume that~$F$ has at least one subtree defect $(u,v)$ and can choose $u$ to have  height in the term tree that is maximal amongst the subtree defects.
Let~$v'$ be the unique successor of $F(u)$ lying on the path from $F(u)$ to $F(v)$.
By the maximality of $u$, the subterm $t_{F(v)}$ is identical to $s_v$ and as $s_v\leq t_{v'}$ we have   $s_v\preceq t_{v'}$ by observation~(1).  If the subtree $t_{v'}$ rooted at $v'$ is replaced by $s_v$, observation~(2) implies that the resulting term~$t'$ has ${t'}\preceq {t}$.  
Moreover, $t'$ is reduced, because all edges in $t'$ are already in the reduced term $t$ except for the newly created edge connecting $F(u)$ to the root of $s_v$, and this edge has the same labels as the edge $(u,v)$ in the reduced term $s$.
The tree embedding~$F$ is easily amended to show $s\inj t'$, with one fewer subtree defect.
Because the number of subterm defects for~$F$ is finite, repeating this process produces a sequence of reduced terms $t',t'',\dots$ with $t\succeq t'\succeq t''\succeq \dots$ that eventually leads to a term containing $s$ as a subterm.  
Thus ${s}\preceq {t}$ by observation~(1), which completes the proof.
\end{proof}
In the following, and in other problems, \emph{nontrivial} is intended to mean including at least one of the operations $\uparrow,\cc,!,\exp_2$, as well as $+$ and/or $\cdot$.
\begin{problem}\label{prob:ordertype}
What is the order type of $\preceq$ for nontrivial subsignatures of $\{+,\cdot,\uparrow,\cc,!,\exp_2,1\}$?
\end{problem}
The various subsignatures offer a range of potentially interesting explorations that might be more accessible than Skolem's nearly 70-year old conjecture.  As an example, a precise solution to Problem~\ref{prob:ordertype} can be obtained in the following two cases, setting a lower bound for all larger signatures.
\begin{thm}\label{thm:prec}
The order type of $\preceq$ for both the one variable $\{+,!\}$-term functions and the one-variable $\{+,\exp_2\}$-term functions is precisely~$\epsilon_0$.
\end{thm}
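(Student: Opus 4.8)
The plan is to prove matching bounds: the order type is at most $\epsilon_0$, and at least $\epsilon_0$. That $\preceq$ is already a well-order on each of the two classes is available, since both are suborders of $\Co_{\{\uparrow,\cc,!,\exp_2\}}$, which is well-ordered by the theorem above, so only the order-type computation is at issue. I would treat the two signatures in parallel, writing $F$ for the single unary operation ($F=!$ or $F=\exp_2$), and begin by fixing a normal form for the one-variable $\{+,F,1\}$-terms: flatten all sums and put the summands in a fixed order; never apply $F$ to a subterm whose value is a constant (which disposes of identities like $1!\approx 1$ and $\exp_2(1)\approx 1+1$, and more generally of $F$ of a constant being a constant); and, in the exponential case, additionally collect additive-constant-coset-related exponential summands via $\exp_2(s)+\exp_2(s)\approx\exp_2(s+1)$, so that each maximal exponential block is written $c\cdot\exp_2(m)$ with $c\in\mathbb{N}$ and $m$ minimal among $\{m+k:k\in\mathbb{N}_0\}$. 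Using Lemma~\ref{lem:linearorder} to compare terms by eventual dominance, one checks that distinct reduced terms name distinct functions, so that invariants of functions may be defined on reduced representatives.

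For the upper bound, filter the functions by the $F$-height $h$ of a reduced representative, and let $\mathcal C_h$ be the set of functions so represented. The key point is that $\mathcal C_h$ is an initial segment: by induction on $h$ every reduced term of $F$-height $\ge h+1$ has value $\succeq F^{(h+1)}(x)$ (using that subterms are $\preceq$ the whole term and that $F$ is eventually strictly increasing — observations $(1)$ and $(2)$ in the proof of the well-ordering theorem), whereas every function in $\mathcal C_h$ is $\prec F^{(h+1)}(x)$; hence $\mathcal C_h$ is exactly the set of functions below $F^{(h+1)}(x)$. Now bound the order type $\nu_h$ of $\mathcal C_h$ inductively: $\nu_0=\omega^2$ (the $\omega$ constants followed by the non-constant linear functions $ax+b$ in lexicographic order on $(a,b)$), and a term of height $\le h+1$ is a sum of a linear term and finitely many blocks $c_i\!\cdot\! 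F(m_i)$ with the $m_i$ non-constant of height $\le h$ (coset-minimal, in the exponential case). Since $s\mapsto F(s)$ is a $\preceq$-embedding on functions and comparison of such sums is governed by the leading block (eventual dominance), one gets $\nu_{h+1}\le\omega^{\nu_h}\cdot\omega^2$, which stays below $\epsilon_0$; hence the whole order type is $\sup_h\nu_h\le\epsilon_0$. Equivalently, this bookkeeping can be packaged as a strictly $\preceq$-monotone map $\Phi$ into $[0,\epsilon_0)$ determined by $\Phi(1)=1$, $\Phi(x)=\omega$, $\Phi(s+t)=\Phi(s)\oplus\Phi(t)$, $\Phi(s!)=\omega^{\Phi(s)}$ and $\Phi(c\cdot\exp_2(m))=\omega^{\Phi(m)}\cdot c$, the normal form being precisely what makes $\Phi$ well defined on functions.

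For the lower bound it suffices to build a strictly $\prec$-increasing map $\Psi$ from $[0,\epsilon_0)$ into the one-variable $\{+,F\}$-term functions. I would define $\Psi$ by recursion on Cantor normal form: finite ordinals go to constants, $\omega a+b\mapsto ax+b$, and an $\omega$-power is realised through $F$, say $\Psi(\omega^{\beta}\!\cdot\! c+\rho)=c\cdot F(\Psi^{*}(\beta))+\Psi(\rho)$ for an auxiliary recursion $\Psi^{*}$ absorbing the offset between the ordinal scale and the function scale. The verification that $\Psi$ is increasing reduces, block by block, to the eventual-dominance fact that $F$ of a non-constant term dominates every sum of strictly smaller blocks (for $\exp_2$: $c\cdot 2^{ax}\prec 2^{2x}\prec 2^{2^x}$, and analogously for $!$), together with Lemma~\ref{lem:linearorder}. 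A strictly increasing map from $[0,\epsilon_0)$ forces order type $\ge\epsilon_0$, so with the previous paragraph the order type is exactly $\epsilon_0$.

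The main obstacle is the interplay of $+$ with $F$. For $F=\exp_2$ the identity $\exp_2(s)+\exp_2(s)\approx\exp_2(s+1)$ and its closure within each additive-constant coset mean that the naive dictionary ``$+$ as natural sum, $\exp_2$ as $\omega^{(\cdot)}$'' is not even well defined on functions; arranging the normal form so that the order-type bookkeeping — equivalently, $\Phi$ — is well defined and strictly $\prec$-monotone is the delicate step, and it is exactly where the restriction to one variable and the absence of $\cdot$ are used, since enlarging the signature towards $\Sk$ runs into Skolem's open problem. For $F=!$ the analogous obstruction, namely $1!\approx 1$ and $2!\approx 2$, is comparatively mild and is handled simply by forbidding $F$ on constant subterms; the two cases then run in parallel.
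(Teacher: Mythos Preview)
You have misread the signature: the theorem concerns $\{+,!\}$ and $\{+,\exp_2\}$ \emph{without} the constant~$1$.  Once $1$ is dropped, most of what you flag as obstacles evaporates.  There are no constant term functions and no linear terms $ax+b$ with $b\neq 0$ (so your $\nu_0$ should be $\omega$, not $\omega^2$), and the identity $\exp_2(s)+\exp_2(s)\approx\exp_2(s+1)$ is harmless because $s+1$ is not a term in the language; the ``additive-constant coset'' bookkeeping you introduce is solving a problem that does not arise.

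With the correct signature the paper dispenses with the upper/lower split entirely and exhibits a single order isomorphism.  Writing $\Box\in\{!,\exp_2\}$ and $nt$ for the $n$-fold sum of $t$, set $t_1:=x$ and, for $\alpha$ in Cantor normal form $\sum_i\omega^{\beta_i}c_i$, put $t_\alpha:=\sum_i c_i\,\Box(t_{\beta_i})$ (with the convention $\Box(t_0):=x$).  The family $\{t_\alpha\}$ contains~$x$ and is visibly closed under $+$ (natural sum on indices) and under $\Box$ (index $\alpha\mapsto\omega^\alpha$), so it is exactly the closure of $\{x\}$ under the two operations---that is surjectivity.  Order preservation reduces to the single growth fact $c\cdot\Box(f)\prec\Box(g)$ whenever $f\prec g$, which in the $\exp_2$ case uses $f\prec g\Rightarrow f+x\preceq g$ (available inductively since $t_{\beta+1}=t_\beta+x$) and in the factorial case uses $g!\geq(f+1)\cdot f!$.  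This map is essentially the inverse of your $\Phi$, but surjectivity comes for free from closure, so no height filtration, no normal-form analysis, and no separate lower-bound construction~$\Psi$ are needed.

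Your two-bound strategy could plausibly be pushed through for the enlarged signature $\{+,F,1\}$ you actually treat---a mildly stronger statement---but several steps are only asserted (distinct reduced terms give distinct functions; the unspecified auxiliary~$\Psi^*$; strict monotonicity of $\Phi$ under your coset normalisation), and for the theorem as stated the extra machinery is unnecessary.
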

\begin{proof}
Let $\Box$ be either of $!$ or $\exp_2$.  
For any $n\in \mathbb{N}$ and any $\{+,\Box\}$-term $t$ we can use $nt$ to denote the sum of $n$ copies of $t$.  
Now set $t_1:=x$, and inductively, for ordinal $\alpha<\epsilon_0$ in Cantor normal form $\alpha=\sum_{i=0}^k\omega^{\beta_i}\cdot c_i$ with $\beta_0>\dots> \beta_k\geq 0$, assign $\alpha\mapsto t_\alpha:=\sum_{i=0}^kc_i \cdot (\Box(t_{\beta_i}))$.  
The terms $t_\alpha$ are trivially closed under addition and factorial and include $x$, so constitute the smallest class of terms containing~$x$, up to equivalence as term functions.  
Thus the mapping is surjective.   
The assignment can be seen to be an order embedding (hence injective), once the following observation has been established:  $c\cdot  \Box(f(x))\prec \Box(g(x))$ whenever $c\in\mathbb{N}$ and $f(x)\prec g(x)$. 
In the case of $\Box={!}$ this observation follows because $g(x)!\geq (f(x)+1)\cdot (f(x)!)$ for large enough~$x$, and $f(x)+1$ eventually dominates~$c$.  
In the case of $\Box = {\exp_2}$, this is because $f(x)\prec g(x)$ implies $f(x)+x\preceq g(x)$, so that $\exp_2(g(x))\geq \exp_2(x)\exp_2(f(x))$ for large enough~$x$, and $\exp_2(x)$ eventually dominates~$c$.
\end{proof}
The following problem has a positive solution for the signatures $\{+,!\}$ and $\{+,\exp_2\}$, as this follows from the normal form for terms that are created during the proof of Theorem~\ref{thm:prec}.
\begin{problem}\label{prob:precdec}
Is $\prec$ decidable on any \up(or all\up) nontrivial subsignatures of $\{+,\cdot,\uparrow,\cc,!,\exp_2,1\}$?
\end{problem}
The cases $\{+,\cdot,!,\exp_2,1\}$ and $\{+,\cdot,\cc,\exp_2,1\}$ offer the appealing feature of asymptotic approximations to trigonometric constants, as we now explain.  
We first define $E^+$ to denote the smallest set of reals containing $1$ and closed under $+$, $\cdot$, $\div$ and $\exp$ (base $e$ exponentiation).  
Richardson~\cite{ric} showed the problem of deciding equality between members of $E^+$ (given as expressions in the stated operations) reduces to the problem of deciding $\prec$ on $\Sk$; see Proposition~\ref{pro:Richardson} below.
Gurevi\v{c}~\cite{gur86} subsequently showed that this problem is Turing equivalent to the decidability of $\prec$ on $\Sk(2^{2^x})$ (the Skolem exponential functions ordered below $2^{2^x}$).  
This explains some of the challenge to understanding $\prec$, as even the irrationality of $e^e$ (which is contained in $E^+$ as $\exp(\exp(1))$) remains open.  
The recent work of Berarducci and Mamino~\cite{bermam} establishes a claim announced by van den Dries and Levitz in~\cite{vdDlev}, showing, amongst other results, that only the reals in $E^+$ can be approximated by ratios between members of $\Sk$: every other ratio goes to $0$ or diverges.  
For subset $\tau$ of $\{\uparrow,\cc,!,\exp_2\}$, let $\Co_\tau$ denote the one variable $\{+,\cdot,1\}\cup\tau$-terms.  Let $C_\pi^+$ denote the smallest set of real numbers obtained that includes the number~$\pi$ and is closed under $+,\cdot,\div$.  
\begin{pro}
All constants in $C_\pi^+$ can be asymptotically approximated by a ratio between two functions in $\Co_{\{!,\exp_2\}}$ or $\Co_{\{\cc,\exp_2\}}$.
\end{pro}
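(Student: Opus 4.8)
The plan is to produce, for each of the two target signatures, an explicit pair of one-variable terms whose ratio tends to $\pi$, and then to observe that the set of real numbers realisable as such a limit is automatically closed under $+$, $\cdot$ and $\div$, and hence contains $C_\pi^+$ once it contains $\pi$.

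\textbf{Step 1: the constant $\pi$.} I would start from the central binomial asymptotic $\binom{2n}{n}\sim 4^n/\sqrt{\pi n}$, a consequence of Stirling's formula. Squaring to clear the square root gives $16^n/\bigl(n\binom{2n}{n}^2\bigr)\to\pi$. In the signature $\{+,\cdot,\cc,\exp_2,1\}$ this is already a ratio of $\Co_{\{\cc,\exp_2\}}$-terms: the numerator is $\exp_2(x+x+x+x)$ and the denominator is $x\cdot(x\cc x)\cdot(x\cc x)$, using $x\cc x=\binom{x+x}{x}=\binom{2x}{x}$. In the signature $\{+,\cdot,!,\exp_2,1\}$ I would instead expand $\binom{2x}{x}=(2x)!/(x!)^2$ and clear denominators: the numerator becomes $\exp_2(x+x+x+x)\cdot x!\cdot x!\cdot x!\cdot x!$ and the denominator $x\cdot(x+x)!\cdot(x+x)!$, both $\Co_{\{!,\exp_2\}}$-terms, and the ratio is the same sequence tending to $\pi$.

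\textbf{Step 2: closure.} Let $S_\tau$ denote the set of positive reals of the form $\lim_{x\to\infty}f(x)/g(x)$ with $f,g\in\Co_\tau$. Every $\Co_\tau$-term takes values $\ge 1$ on $\mathbb N=\{1,2,\dots\}$ — this holds for the generators $1$ and $x$ and is preserved by $+,\cdot,!,\exp_2,\cc$ — so these ratios are everywhere defined, and each such limit, when it exists and is positive, lies in $S_\tau$. If $c_i=\lim f_i/g_i$ for $i=1,2$, then $c_1c_2=\lim(f_1f_2)/(g_1g_2)$, $\ c_1/c_2=\lim(f_1g_2)/(g_1f_2)$, and $c_1+c_2=\lim(f_1g_2+f_2g_1)/(g_1g_2)$; in each case the new numerator and denominator again lie in $\Co_\tau$, since $\Co_\tau$ is closed under $+$ and $\cdot$. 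Thus $S_\tau$ is a set of positive reals containing $\pi$ and closed under $+$, $\cdot$ and $\div$, so by minimality of $C_\pi^+$ we get $C_\pi^+\subseteq S_\tau$, which is the assertion.

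\textbf{Where the work is.} There is essentially only one idea, in Step 1: recognising that $\binom{2n}{n}$ (equivalently the ratio $(2n)!/(n!)^2$) supplies $\sqrt{\pi}$ up to an elementary factor, so that a single squaring together with a division by $x$ lands exactly on $\pi$. Everything after that is bookkeeping; the only minor points to confirm are that $\exp_2$ is needed purely to realise the factor $16^x$ (and is present in both claimed signatures), and that the argument is insensitive to whether $\Co_\tau$ is read as a set of terms or of term functions on $\mathbb N$.
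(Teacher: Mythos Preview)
Your proof is correct and is essentially the same as the paper's: both obtain $\pi$ from the central binomial asymptotic via $\pi\sim\exp_2(4x)/\bigl(x(x\cc x)^2\bigr)=\exp_2(4x)(x!)^4/\bigl(x((2x)!)^2\bigr)$, and then close under $+,\cdot,\div$ by the standard cross-multiplication formulas. The only addition you make is the explicit remark that $\Co_\tau$-terms are $\ge 1$ on $\mathbb N$, which the paper leaves implicit.
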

\begin{proof}
From Stirling's asymptotic approximation for $x!$ we obtain the following well known asymptotic relationship $x\cc x:=\binom{2x}{x}\sim \frac{\exp_2(2x)}{\sqrt{x\pi}}$.  Rearranging, we obtain either $\pi\sim \frac{\exp_2(4x)}{{x (x\cc x)^2}}=\frac{\exp_2(4x)(x!)^4}{{x ((2x)!)^2}}$.  Both the numerator and denominator here are in $\Co_\tau$ for the signatures $\tau$ in the proposition statement.  Approximation for remaining elements of $C_\pi^+$ can be  extended in a routine way using induction on applications of $+,\cdot,\div$, in exactly the same way as for~$E^+$.  As an example, if $c_1$ is asymptotically approximated by $f_1(x)/g_1(x)$ and $c_2$ by $f_2(x)/g_2(x)$, then $c_1+c_2$ is asymptotically  approximated by $f_1(x)/g_1(x)+f_2(x)/g_2(x)=\frac{f_1(x)g_2(x)+f_2(x)g_1(x)}{g_1(x)g_2(x)}$.
\end{proof}
Unlike for $E^+$, there is a simple solution to the problem of deciding equality between  constants represented in $C_\pi^+$: every number in $C_\pi^+$ can be rearranged as a rational expression between polynomials in $\pi$, so the transcendality of $\pi$  ensures that equalities between elements of $C_\pi^+$ can be decided algebraically.  Let~$E_\pi^+$ denote the smallest class containing $\{+,\cdot,\div,\exp,1,\pi\}$; this includes many numbers such as $e+\pi, e\cdot\pi$ and so on, which are unknown for irrationality (or even integer status in some cases: $e^{e^{e^e}}$), as well as many known transcendental numbers such as  $e,\pi$ themselves, and the Gelfand constant $e^\pi$.  Deciding equality appears to be nontrivial, and even small expressions can have surprisingly similar values: $(\pi+20)-e^\pi$ is less than $0.001$ for example~\cite{OEIS}.
\begin{pro}\label{pro:Richardson}
The problem of deciding equality between constants in $E_\pi^+$ reduces to the decidability of $\prec$ on the one-variable $\{+,\cdot,!,\uparrow,1\}$- or $\{+,\cdot,\cc,\uparrow,1\}$-term functions.
\end{pro}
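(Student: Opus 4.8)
The plan is to reduce the $E_\pi^+$ problem to Richardson's theorem for $E^+$~\cite{ric} by accommodating the single extra generator $\pi$. Richardson's reduction computably transforms a query ``$A=B$?'', for constant expressions $A,B$ over $\{1,+,\cdot,\div,\exp\}$, into finitely many queries about the relation $\prec$ among one-variable $\{+,\cdot,\uparrow,1\}$-term functions built explicitly from $A$ and $B$; the decoding of the answers relies on the trichotomy ``$u\approx v$ on $\mathbb{N}$, or $u\prec v$, or $v\prec u$'', which for these functions is exactly the content of Lemma~\ref{lem:linearorder}. Inspecting that reduction, the only properties of the class $\Sk$ it uses are: closure under the term-building operations and under the substitutions Richardson performs, eventual monotonicity of its members, and the fact that $\preceq$ linearly orders it. Thus it is enough to (a) realise $\pi$ inside Richardson's framework, and (b) check that enlarging the ambient class from $\Sk$ to the one-variable $\{+,\cdot,!,\uparrow,1\}$-term functions (respectively the $\{+,\cdot,\cc,\uparrow,1\}$-term functions) preserves these properties.

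For (a), Stirling's estimate $\binom{2x}{x}\sim \exp_2(2x)/\sqrt{\pi x}$ rearranges to
\[
\pi=\lim_{x\to\infty}\frac{\exp_2(4x)}{x\cdot\binom{2x}{x}^2}.
\]
In the signature $\{+,\cdot,!,\uparrow,1\}$ the right-hand side is $\lim_{x\to\infty}N(x)/D(x)$ with $N(x)=\exp_2(4x)\cdot(x!)^4$ and $D(x)=x\cdot((x+x)!)^2$; in the signature $\{+,\cdot,\cc,\uparrow,1\}$ it is $\lim_{x\to\infty}\exp_2(4x)/(x\cdot(x\cc x)\cdot(x\cc x))$, using $x\cc x=\binom{2x}{x}$. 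In either case $N$ and $D$ are genuine one-variable term functions in the stated signature, since $\exp_2(4x)=(1+1)\uparrow(x+x+x+x)$; and $1$ is trivially the ratio $1/1$. (The proposition just above gives, more generally, such a representation for every member of $C_\pi^+$, but only $\pi$ is needed here.)

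For (b), the classes $\Co_{\{!,\uparrow\}}$ and $\Co_{\{\cc,\uparrow\}}$ are closed under the same term-building and substitution operations as $\Sk$, each of their members is eventually monotone (tending to $\infty$ or constant), and $\preceq$ is a linear order on each of them by Lemma~\ref{lem:linearorder} (in fact a well-order, by the well-ordering theorem proved above for $\Co_{\{\uparrow,\cc,!,\exp_2\}}$). Hence I would run Richardson's construction unchanged on expressions built from $\{1,\pi\}$ by $+,\cdot,\div,\exp$, using the two base representations above together with Richardson's clauses for $+,\cdot,\div,\exp$; the presence of the atom $\pi$ pushes the constructed terms out of $\Sk$ but only into $\Co_{\{!,\uparrow\}}$ (respectively $\Co_{\{\cc,\uparrow\}}$). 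The resulting procedure, given an oracle for $\prec$ on the appropriate class, decides equality in $E_\pi^+$.

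The main obstacle is the correctness of the ``substitute the representation of $\pi$'' step: because $\pi$ is realised only as an asymptotic limit — not exactly, and not by a rational function — one must be sure that Richardson's bookkeeping, which is delicate precisely because it has to separate equal reals from distinct-but-asymptotically-indistinguishable ones, survives. The point to make precise is that specialising $\pi$ to $N(x)/D(x)$ is substitution of term functions into term functions, an operation under which both Richardson's construction and its correctness proof are closed; heuristically, a constant of $E_\pi^+$ is a constant of $E^+$ in one extra indeterminate, and we are only instantiating that indeterminate by a term function realising $\pi$ asymptotically. This introduces no phenomenon absent from Richardson's original analysis.
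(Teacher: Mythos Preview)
Your proposal is correct and follows essentially the same route as the paper: both use the Stirling-based asymptotic representation of $\pi$ (from the preceding proposition) to obtain a ratio $N(x)/D(x)$ of term functions in the required signature, and then invoke Richardson's reduction from~\cite{ric} (the paper cites his Theorems~7 and~8 explicitly, writing out the key formulas $(xg(x)+f(x))^x/(xg(x))^x\sim e^c$ and the $x\mapsto x^2$ comparison trick). The only difference is presentational---the paper spells out Richardson's two steps concretely, whereas you argue more structurally about which properties of $\Sk$ the reduction needs and verify they persist in $\Co_{\{!,\uparrow\}}$ and $\Co_{\{\cc,\uparrow\}}$---but the substance is the same.
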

\begin{proof}
The approximability of $\pi$ adds this number to the generators of $E^+$, giving~$E_\pi^+$.  All other details are identical to those for~\cite[Theorems~7,8]{ric} (or~\cite[Proposition~2.1]{gur86}), but we provide some of the salient steps in order to provide some  intuition to the reader. 
First, \cite[Theorem~7]{ric} observes that if $f(x)/g(x)$ asymptotically approximates a constant~$c$, then 
\[
\left(\frac{x+\frac{f(x)}{g(x)}}{x}\right)^x=\frac{(xg(x)+f(x))^x}{(xg(x))^x}\sim e^c.
\] 
Then \cite[Theorem~8]{ric} shows that if $f_1(x)/g_1(x)\sim c_1$ and $f_2(x)/g_2(x)\sim c_2$, for constants $c_1,c_2\in E_\pi^+$, then $c_1<c_2$ if and only if 
\[
xf_1(x^2)g_2(x^2)+g_1(x^2)g_2(x^2)\prec xf_2(x^2)g_1(x^2).
\]
Finally, equality $c_1=c_2$ holds provided both $c_1<c_2$ and $c_2<c_1$ fail.
\end{proof}

\section*{Appendix B: axioms}
As a final appendix to the main results of this paper, we list a candidate list of natural laws in the combinatorial signature 
$\{+,\cdot,\cc,!,\exp_2,0,1\}$.  Identities for binomial coefficients have seen extensive exploration, including  books such as Petkov\v{s}ek, Wilf and Zeilberger~\cite{PWZ}, Riordan~\cite{rio} or Stanley~\cite{sta}, however the vast majority of these  require indexed addition ($\sum_{i=0}^n$), and cannot be expressed in the equational logic of $\{+,\cdot,\cc,!,\exp_2,0,1\}$ in any obvious way, if at all.

We begin with the usual commutative semiring laws for $\{+,\cdot,0,1\}$:
\begin{align*}
0+x\approx x\approx x,\quad &1\cdot x \approx x\\
x+y\approx y+z, \quad & x\cdot y\approx y\cdot x\\
x+(y+z)\approx (x+y)+z,\quad  & x\cdot (y\cdot z)\approx (x\cdot y)\cdot z, \\
x\cdot (y+z)  \approx  (x\cdot y)+(x\cdot z),\quad & 0\cdot x\approx 0.
\end{align*}
Laws for base 2 exponentiation:
\begin{align}
\exp_2(0)\approx 1,\label{eq:exp0}\\
\exp_2(1)\approx 2,\label{eq:exp1}\\
\exp_2(x+y)\approx \exp_2(x)\cdot \exp_2(y).\label{eq:expplus}
\end{align}
The factorial laws:
\begin{align}
0!\approx 1,\quad (x+1)!\approx  (x+1)\cdot x!\label{eq:factorial}
\end{align}
The binomial coefficient laws:
\begin{align}
&x\cc 0\approx 1,\label{eq:cc0}\\
&x\cc 1\approx x+1,\label{eq:ccplusone}\\
&((x+1)\cc y) + (x\cc (y+1))\approx (x+1)\cc(y+1),\label{eq:pascal}\\
&[(x+y) \cc z] \cdot (x \cc y) = [(z+x)\cc y] \cdot (z \cc x).\label{eq:trinomial}
\end{align}
Each of \eqref{eq:cc0}--\eqref{eq:trinomial} are easily verified once written as fractions in the usual way.  The first three are familiar properties of Pascal's triangle and are sufficient to derive each entry in Pascal's triangle.  Of course, Pascal's triangle is commutative as well---meaning $x\cc y \approx y\cc x$---but this is not required to derive the individual values of the triangle.  While commutativity of $\cc$ is valid globally, it has been excluded from the axioms because it follows from Law~\eqref{eq:trinomial} by assigning $y=0$ and using \eqref{eq:cc0} and the commutative semiring laws.  Law~\eqref{eq:trinomial} might be called the ``Trinomial Law'', as when written out as a fraction, both sides cancel to the trinomial $\binom{x+y+z}{x,y,z}$, and the law, along with commutativity of $\cc$, $+$ and $\cdot$, essentially states that all permutations of $x,y,z$ give the same value.  An obvious $n$-ary multinomial variant is:
\begin{multline*}
[(x_1+\dots+x_{n-1})\cc x_n]\dots [(x_1+x_2)\cc x_3][x_1\cc x_2]\\
\approx [(x_{1\pi}+\dots+x_{(n-1)\pi})\cc x_{n\pi}]\dots [(x_{1\pi}+x_{2\pi})\cc x_{3\pi}][x_{1\pi}\cc x_{2\pi}]
\end{multline*}
(where $\pi$ is any permutation of $\{1,2,\dots,n\}$).  Because all permutations can be obtained by composition of a transposition and a cycle, it suffices to prove this law is a consequence in the case where $\pi$ is a transposition and where $\pi$ is a cyclic shift in the variable indices; this is easily done by induction, starting at $n=3$; we omit the proof.  An exploration of standard lists of combinatorial identities finds that surprisingly few can be written equationally without recourse to indexed summation, generalised forms of binomial coefficients or asymptotic approximations.  Of those, all that we found were provable using the above, though it seems premature to speculate completeness.  As an example, the ``Committee/Chair Law'' $(r+1)\binom{n+1}{r+1}=(n+1)\binom{n}{r}$ can be written using $\cc$ as 
\[
(y+1)\cdot (x\cc (y+1)) \approx (x+y+1)\cdot (x\cc y)
\]
 (where $y=r$, $x=n-y$), but follows easily from the Trinomial Law~\eqref{eq:trinomial} and Law~\eqref{eq:ccplusone}.  (Alternatively,  Law~\eqref{eq:ccplusone} can be proved from the Committee/Chair Law, so the Committee/Chair Law could replace Law~\eqref{eq:ccplusone} in the axioms, but the given list seems simpler.) 

Finally, the following mixed law involving both $!$ and $\cc$ is obvious:
\begin{align}
x!\cdot y!\cdot (x\cc y)\approx (x+y)!
\end{align}

The semiring laws, with \eqref{eq:exp0}--\eqref{eq:expplus} are complete for $\{+,\cdot,\exp_2,0,1\}$, as follows from~\cite{henrub}.  The following question is intended to cover other combinations of the operations beyond $\{+,\cdot,0,1\}$.
\begin{question}
For the various signature combinations\up: are the corresponding sets of axioms complete?
\end{question}

\bibliographystyle{amsplain}


\end{document}